\theoremstyle{plain}
\newtheorem{theorem}{Theorem}
\newtheorem*{theoremnn}{Theorem}
\newtheorem{corollary}[theorem]{Corollary}
\newtheorem{lemma}[theorem]{Lemma}
\newtheorem{proposition}[theorem]{Proposition}
\theoremstyle{remark}
\newtheorem*{remark}{\textbf{Remark}}
\numberwithin{equation}{section}
\newcommand\R{\mathbb R}
\title[Sharp HLS inequalities on the Heisenberg group]
{Existence of maximizers for Hardy-Littlewood-Sobolev inequalities on the Heisenberg group}
\author{Xiaolong Han}
\address{Department of Mathematics, Wayne State University, Detroit, MI 48202, USA}
\email{xlhan@math.wayne.edu}
\subjclass[2010]{39B62, 49J45, 35R03}
\keywords{Hardy-Littlewood-Sobolev inequalities on the Heisenberg
group, concentration compactness principle, existence of maximizers, upper bounds of sharp constants}
\begin{document}
\maketitle

\begin{abstract}
In this paper, we investigate the sharp Hardy-Littlewood-Sobolev inequalities on the Heisenberg group. On one hand, we apply the concentration compactness principle to prove the existence of the maximizers. While the approach here gives a different proof under the special cases discussed in a recent work of Frank and Lieb \cite{FL2}, we generalize the result to all admissible cases. On the other hand, we provide the upper bounds of sharp constants for these inequalities.
\end{abstract}

\section{Introduction}

The $n$-dimensional Heisenberg group is $\mathbb H^n=\mathbb
C^n\times\R$ with group structure given by
$$uv=(z,t)(z',t')=(z+z',t+t'+2Im(z\cdot \overline z))$$
for any two points $u=(z,t),v=(z',t')\in\mathbb H^n$, where
$z,z'\in\mathbb C^n$, $t,t'\in\R$, and $z\cdot\overline
z=\sum_{j=1}^n z_j\overline{z'_j}$. Haar measure on $\mathbb H^n$ is the Lebesgue measure $du=dzdt$, in which $z=x+iy$ with $x,y\in\R^n$.

The Lie algebra of $\mathbb H^n$ is generated by the left invariant
vector fields
$$T=\frac{\partial}{\partial t},
\ X_j=\frac{\partial}{\partial x_j}+2y_j\frac{\partial}{\partial
t},\ Y_j=\frac{\partial}{\partial y_j}-2x_j\frac{\partial}{\partial
t}.$$

For each real number $d\in\R$, we denote the dilation
$\delta_du=\delta_d(z,t)=(dz,d^2t)$, the homogeneous norm on
$\mathbb H^n$ as $|u|=|(z,t)|=(|z|^4+t^2)^{1/4}$, and $Q=2n+2$ as
the homogeneous dimension.

We recall the famous Hardy-Littlewood-Sobolev (shorted as HLS in the following context) inequality on $\R^N$: Let $1<r,s<\infty$ and
$0<\lambda<N$ such that $\frac1r+\frac1s+\frac\lambda N=2$, then
\begin{equation}
\left|\int\int_{\R^N\times\mathbb
R^N}\frac{\overline{f(x)}g(y)}{|x-y|^\lambda}dxdy\right|\leq
C\|f\|_r\|g\|_s
\end{equation}
for any $f\in L^r(\R^N)$ and $g\in L^s(\R^N)$, where $\|\cdot\|_r$
and $\|\cdot\|_s$ are the $L^r$ and $L^s$ norms on $\R^N$,
respectively. And $0<C<\infty$ is a constant depending on $r$, $\lambda$, and $N$ only.

This inequality was introduced by Hardy and Littlewood \cite{HL1,
HL2, HL3} on $\R^1$ and generalized by Sobolev \cite{S} to $\R^N$. We denote by $C_{r,\lambda,N}$ the sharp (best) constant that we can put into (1.1), finding and proving sharp constant $C_{r,\lambda,N}$ and its maximizers\footnote{They are also referred as optimizers or extremals in some literature.} (functions which, when inserted into (1.1), the equality holds with the smallest constant $C_{r,\lambda,N}$) have driven a lot people's attention. In Lieb's paper \cite{Li}, existence of the maximizers was proved. Furthermore, when $r=s=2N/(2N-\lambda)$, he gave explicit formulae of sharp constants $C_{\lambda,n}$ and maximizers. Precisely,
\begin{theoremnn}[Lieb \cite{Li}]
Let $1<r,s<\infty$, $0<\lambda<N$, and $\frac1r+\frac1s+\frac\lambda
N=2$, then there exists a sharp constant $C_{p,\lambda,N}$,
maximizers of $f\in L^r(\R^N)$ and $g\in L^s(\R^N)$ such that
\begin{equation}
\left|\int\int_{\R^N\times\mathbb
R^N}\frac{\overline{f(x)}g(y)}{|x-y|^\lambda}dxdy\right|=
C_{r,\lambda,N}\|f\|_r\|g\|_s.
\end{equation}

If $r=s=2N/(2N-\lambda)$, then
\begin{equation*}
C_{r,\lambda,N}=C_{\lambda,N}=\pi^{\lambda/N}\frac{\Gamma(N/2-\lambda/2)}{\Gamma(N-\lambda/2)}
\left(\frac{\Gamma(N/2)}{\Gamma(N)}\right)^\frac{\lambda-N}{N}.
\end{equation*}

In this case (1.2) holds if and only if $f\equiv
(\text{const.})g$ and
\begin{equation}
f(x)=\frac{c}{\bigg[d+|x-a|^2\bigg]^{(2N-\lambda)/2}}
\end{equation}
for some $c\in\mathbb C$, $0\ne d\in\R$, and $a\in\R^N$.
\end{theoremnn}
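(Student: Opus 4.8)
The plan is to prove the theorem in two stages: existence of maximizers for the full range of admissible $(r,s)$ by rearrangement plus a compactness argument, and then, in the conformal case $p:=r=s=2N/(2N-\lambda)$, an explicit determination of $C_{\lambda,N}$ and its maximizers by exploiting the conformal invariance of (1.1). For existence, I would first note that replacing $f,g$ by $|f|,|g|$ does not decrease the left side of (1.1), and that the Riesz rearrangement inequality gives $\int\int_{\R^N\times\R^N}|x-y|^{-\lambda}|f(x)|\,|g(y)|\,dx\,dy\le\int\int_{\R^N\times\R^N}|x-y|^{-\lambda}f^*(x)g^*(y)\,dx\,dy$ with $\|f^*\|_r=\|f\|_r$ and $\|g^*\|_s=\|g\|_s$; hence a maximizing sequence can be taken radial and nonincreasing. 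The ratio in (1.1) is scale invariant under $f\mapsto\mu^{N/r}f(\mu\,\cdot)$, $g\mapsto\mu^{N/s}g(\mu\,\cdot)$ (here $\tfrac1r+\tfrac1s+\tfrac\lambda N=2$ is used), so I would fix the scale of each term of a normalized maximizing sequence $\|f_k\|_r=\|g_k\|_s=1$, say by $\int_{|x|\le1}f_k^{r}\,dx=\tfrac12$ and similarly for $g_k$. Radial monotone functions with unit $L^r$ norm are locally bounded above on $\R^N\setminus\{0\}$, so Helly's selection theorem produces an a.e.\ convergent subsequence whose limits $f,g$ are radial, nonincreasing, and (by the scale normalization) nonzero; a Brezis--Lieb splitting of the $L^r$, $L^s$ norms together with a decomposition of the double integral shows that the ``vanishing'' and ``splitting'' scenarios are excluded and that $f,g$ attain the supremum. (Equivalently one can run P.-L.\ Lions's concentration--compactness principle, as is done below for $\mathbb H^n$.) This yields (1.2) for all admissible $r,s$.

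For the conformal case I would transplant the problem to the sphere. When $p=r=s=2N/(2N-\lambda)$, inequality (1.1) is invariant under the conformal group of $\widehat{\R^N}\cong S^N$; concretely, under stereographic projection $\mathcal S:S^N\to\R^N$ one has $|\mathcal S(\xi)-\mathcal S(\eta)|=2|\xi-\eta|\,(1+|\mathcal S\xi|^2)^{-1/2}(1+|\mathcal S\eta|^2)^{-1/2}$ and $\mathcal S_*\,d\omega=(2/(1+|x|^2))^N\,dx$, so setting $f(x)=2^{N-\lambda/2}(1+|x|^2)^{(\lambda-2N)/2}F(\mathcal S^{-1}x)$ and likewise $g\leftrightarrow G$, the conformal weights cancel exactly (this is where $p(N-\lambda/2)=N$ enters) and (1.1) becomes $\bigl|\int\int_{S^N\times S^N}\overline{F(\xi)}\,G(\eta)\,|\xi-\eta|^{-\lambda}\,d\omega(\xi)\,d\omega(\eta)\bigr|\le C\,\|F\|_{L^p(S^N)}\,\|G\|_{L^p(S^N)}$ with the same $C$. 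Since the kernel $|\xi-\eta|^{-\lambda}$ is positive definite for $0<\lambda<N$, the bilinear form $\Phi(F,G)$ satisfies $|\Phi(F,G)|\le\Phi(F,F)^{1/2}\Phi(G,G)^{1/2}$, so one may take $G=F\ge0$, and the spherical symmetric decreasing rearrangement (transported through $\mathcal S$) lets one further take $F$ radial and nonincreasing about the north pole.

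The candidate extremal on $S^N$ is the constant function, which under $\mathcal S$ becomes $h_*(x)=(1+|x|^2)^{-(2N-\lambda)/2}$ and, after an arbitrary conformal transformation, the family $c\,(d+|x-a|^2)^{-(2N-\lambda)/2}$ of (1.3). To prove that this is optimal and, up to the conformal group, the only maximizer, I would use the competing-symmetries argument: with $Rf$ the symmetric decreasing rearrangement on $\R^N$ and $Uf$ the pullback under stereographic projection from the opposite pole (i.e.\ the inversion $x\mapsto x/|x|^2$ carrying its conformal weight), both operations preserve $\|\cdot\|_p$, do not decrease (resp.\ preserve) $\Phi$, and have $h_*$ as common fixed point; one then shows $(RU)^n f\to h_*$ in $L^p$ for every nonnegative $f$ with $\|f\|_p=1$, whence $\Phi(f,f)\le\lim_n\Phi((RU)^nf,(RU)^nf)=\Phi(h_*,h_*)$ and $h_*$ is a maximizer. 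Equality in this chain, together with the strict form of the Riesz rearrangement inequality (equality forces $f$ to be a translate of a symmetric decreasing function), then forces any maximizer to be a conformal image of $h_*$, and $f\equiv(\text{const})g$ follows from the Cauchy--Schwarz step. The value $C_{\lambda,N}=\Phi(h_*,h_*)/\|h_*\|_p^2$ is a Beta-type integral over $S^N\times S^N$ that evaluates to the stated ratio of Gamma functions.

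The main obstacle is this last rigidity step. Reducing (1.1) to the sphere and identifying the constant as a critical point are formal once the conformal weights are bookkept, but showing that it is the \emph{unique} extremal modulo the conformal group needs genuine input: conformal invariance makes the second variation at the constant only negative semidefinite, degenerate precisely along the conformal orbit, i.e.\ on the first-degree spherical harmonics, so the elementary spectral fact that the Riesz operator on $S^N$ has eigenvalues strictly decreasing in the degree (Funk--Hecke) does not by itself rule out a nonconstant maximizer. It is the global convergence $(RU)^n f\to h_*$ --- or, as an alternative, a careful exploitation of the Euler--Lagrange equation after using a conformal map to kill the degree-one (center-of-mass) component of the maximizer --- that closes this gap.
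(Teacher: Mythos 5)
You should first be aware that the paper does not prove this statement at all: it is quoted as background and attributed to Lieb \cite{Li}, with the surrounding remark pointing to \cite{LL}, \cite{FL1,FL3} and \cite{Lio4} for proofs. So the comparison is really with the literature rather than with anything in this paper. Your outline splices together two different known arguments: the existence part (reduction to radial decreasing functions by Riesz rearrangement, a scaling normalization, Helly selection, and a Br\'ezis--Lieb splitting) is essentially Lieb's original route, with Lions's concentration compactness as the alternative the paper itself adapts to $\mathbb H^n$; the identification of the extremals via the alternation of rearrangement $R$ and the conformal inversion $U$ and the convergence $(RU)^nf\to h_*$ is the competing-symmetries proof of Carlen and Loss, which postdates \cite{Li} (Lieb's own identification in the diagonal case proceeds instead by conformal transplantation to the sphere combined with a rearrangement rigidity argument applied to an inverted optimizer). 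Both routes are legitimate, and your reduction to the sphere with the weight $2^{N-\lambda/2}(1+|x|^2)^{(\lambda-2N)/2}$ is correctly bookkept, apart from the chordal-distance formula having its conformal factors on the wrong side: it should read $|\xi-\eta|=2\,|\mathcal S\xi-\mathcal S\eta|\,(1+|\mathcal S\xi|^2)^{-1/2}(1+|\mathcal S\eta|^2)^{-1/2}$.

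Two substantive gaps remain. First, in the existence step for $r\ne s$ there is only \emph{one} scaling parameter $\mu$, acting simultaneously on $f$ and $g$, so you cannot impose $\int_{|x|\le1}f_k^r\,dx=\tfrac12$ and the analogous normalization on $g_k$ independently; after fixing the scale of $f_k$ one must separately rule out that $g_k$ concentrates at a point or spreads to infinity, which is a genuine (if standard) extra estimate in \cite{Li} that your sketch skips. Second, as you candidly acknowledge, the entire classification (1.3) rests on the unproved convergence $(RU)^nf\to h_*$ together with the strict form of the Riesz rearrangement inequality; without that input your argument only produces \emph{a} radial maximizer and verifies that $h_*$ is a critical point, and your own observation about the degenerate second variation along the conformal orbit correctly explains why no soft spectral (Funk--Hecke) argument can substitute for it. Naming the missing theorem is not the same as proving it, so as written the proposal establishes existence (modulo the first point) but not the sharp constant or the equality cases.
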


\begin{remark}\hfill
\begin{enumerate}[1.]
\item The proof of the above theorem can also be found in Lieb and
Loss's monograph \cite{LL} with more details, in which they also
proved that the sharp constant $C_{r,\lambda,N}$ satisfies
\begin{equation}
C_{r,\lambda,N}\le\frac{N}{rs(N-\lambda)}\left(\frac{\omega_{N-1}}{N}\right)^{\lambda/N}\left[\left(\frac{\lambda/N}{1-1/r}\right)^\frac\lambda
N+\left(\frac{\lambda/N}{1-1/s}\right)^\frac\lambda N\right],
\end{equation}
where $\omega_{N-1}$ is the area of unit sphere in $\R^N$, i.e.,
$\omega_{N-1}=2\pi^{N/2}/\Gamma(N/2)$. The original proof by Lieb \cite{Li} applies rearrangement methods, a new rearrangement-free proof was provided by Frank and Lieb \cite{FL1, FL3}.

\item The existence of maximizers were also proved by Lions (\S 2.1 in \cite{Lio4}), which is an application of the concentration compactness principle introduced by him in a series of papers \cite{Lio1, Lio2, Lio3, Lio4}. See also \S II.4 in \cite{Str} about the application on sharp Sobolev inequalities.

\item The uniqueness of maximizers (1.3) was proposed by Lieb \cite{Li} as an open problem, and was answered by Chen, Li,
and Ou \cite{CLO2}, in which they used moving plane method for
integral equations, (A different approach using moving sphere method for integral equations has been done by Li \cite{L}.) a related work by Chen, Li, and Ou \cite{CLO1} studied the integral systems using the similar method. The formula of the maximizers (after dilations and translations) assume
$$\frac{1}{\bigg[1+|x|^2\bigg]^{(2N-\lambda)/2}}.$$

\item We shall point out that (1.4) is not sharp, and neither sharp
constant $C_{r,\lambda,N}$ nor maximizers are known yet when $r\ne s$.
\end{enumerate}
\end{remark}

The analogous HLS inequality on the Heisenberg group was announced by Stein \cite{St} and proved by Folland and Stein \cite{FS} in terms of fractional integral (Proposition 8.7 and Lemma 15.3 in \cite{FS}).

\begin{theoremnn}[Folland \& Stein \cite{FS}]
Let $1<r,s<\infty$, $0<\lambda<Q$, and $\frac1r+\frac1s+\frac\lambda Q=2$, then there exists a constant $C$, independent of
$f\in L^r(\mathbb H^n)$ and $g\in L^s(\mathbb H^n)$, such that
\begin{equation}
\left|\int\int_{\mathbb H^n\times\mathbb
H^n}\frac{\overline{f(u)}g(v)}{|u^{-1}v|^\lambda}dudv\right|\leq
C\|f\|_r\|g\|_s.
\end{equation}
\end{theoremnn}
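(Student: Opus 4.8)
The plan is to deduce (1.5) from an $L^s\to L^{r'}$ mapping property of the associated fractional integral operator on $\mathbb H^n$, together with duality. Writing
\[
\mathcal I_\lambda g(u)=\int_{\mathbb H^n}\frac{g(v)}{|u^{-1}v|^\lambda}\,dv,
\]
which is the convolution of $g$ with the radial kernel $K(w)=|w|^{-\lambda}$, the left-hand side of (1.5) equals $\bigl|\int_{\mathbb H^n}\overline{f(u)}\,\mathcal I_\lambda g(u)\,du\bigr|\le\|f\|_r\,\|\mathcal I_\lambda g\|_{r'}$ by H\"older's inequality, where $\tfrac1r+\tfrac1{r'}=1$. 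Since $\tfrac1r+\tfrac1s+\tfrac\lambda Q=2$ rearranges to $\tfrac1{r'}=\tfrac1s-\tfrac{Q-\lambda}{Q}$, the theorem reduces to the single estimate
\[
\|\mathcal I_\lambda g\|_{r'}\le C\,\|g\|_s,\qquad \tfrac1{r'}=\tfrac1s-\tfrac{Q-\lambda}{Q},\quad 1<s<\tfrac{Q}{Q-\lambda}.
\]
The condition $1<s<Q/(Q-\lambda)$ is exactly the translation of $1<r,s<\infty$, and since $|u^{-1}v|=|v^{-1}u|$ the pairs $(f,r)$ and $(g,s)$ are interchangeable, so establishing this one estimate suffices.

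Two structural features of $\mathbb H^n$ drive the proof. First, Haar measure $du=dz\,dt$ is bi-invariant ($\mathbb H^n$ is nilpotent, hence unimodular; left and right translations act as measure-preserving shears), and the dilation $\delta_\rho$ has Jacobian $\rho^{2n}\cdot\rho^2=\rho^Q$. As the ball $B(u,\rho)=\{v:|u^{-1}v|<\rho\}$ equals $u\cdot\delta_\rho B(0,1)$, this yields the exact identity $|B(u,\rho)|=c_n\rho^Q$ for every $u$ and $\rho>0$, hence a polar-coordinate formula $\int_{\mathbb H^n}h(|w|)\,dw=c_nQ\int_0^\infty h(\rho)\,\rho^{Q-1}\,d\rho$. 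Second, $(\mathbb H^n,|u^{-1}v|,du)$ is therefore a space of homogeneous type, so the Hardy--Littlewood maximal operator $Mg(u)=\sup_{\rho>0}|B(u,\rho)|^{-1}\int_{B(u,\rho)}|g(v)|\,dv$ is of weak type $(1,1)$ (via a Vitali covering lemma for the doubling balls) and hence bounded on $L^s(\mathbb H^n)$ for every $s>1$.

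With these ingredients I would run the classical Hedberg argument. Fix $R>0$ and split $\mathcal I_\lambda|g|(u)$ into the integrals over $\{|u^{-1}v|\le R\}$ and $\{|u^{-1}v|>R\}$. Summing the near part over the dyadic shells $2^{-j-1}R<|u^{-1}v|\le2^{-j}R$ and using $|B(u,2^{-j}R)|=c_n(2^{-j}R)^Q$ together with $Q>\lambda$ bounds it by $C\,R^{Q-\lambda}\,Mg(u)$; for the far part, H\"older's inequality and the polar-coordinate formula give $C\,\|g\|_s\bigl(\int_{|w|>R}|w|^{-\lambda s'}\,dw\bigr)^{1/s'}=C\,\|g\|_s\,R^{Q/s'-\lambda}$, the integral converging precisely because $\lambda s'>Q$, i.e. $s<Q/(Q-\lambda)$. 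Since $Q-\lambda>0$ while $Q/s'-\lambda<0$, choosing $R=(\|g\|_s/Mg(u))^{s/Q}$ to balance the two terms yields the pointwise bound
\[
\mathcal I_\lambda|g|(u)\le C\,\|g\|_s^{1-\theta}\,Mg(u)^{\theta},\qquad \theta=1-\frac{s(Q-\lambda)}{Q},
\]
and the exponent identity $\tfrac1{r'}=\tfrac1s-\tfrac{Q-\lambda}{Q}$ gives exactly $\theta\,r'=s$. Taking $L^{r'}$ norms and invoking $\|Mg\|_s\le C\|g\|_s$ then produces $\|\mathcal I_\lambda g\|_{r'}\le C\,\|g\|_s^{1-\theta}\,\|g\|_s^{\theta}=C\,\|g\|_s$, and (1.5) follows by duality.

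I do not foresee a genuine obstacle here; the substance is the transfer of the Euclidean HLS machinery to $\mathbb H^n$, and the one point needing care is that both halves of the Hedberg splitting --- the maximal inequality for the near part, and the convergence of $\int_{|w|>R}|w|^{-\lambda s'}\,dw$ for the far part --- are valid exactly on the range $1<r,s<\infty$, so the hypotheses are used sharply. The constant obtained this way is far from optimal; an explicit (still non-sharp) bound of the same shape as (1.4) can instead be extracted by running the layer-cake/weak-type estimate of Lieb and Loss \cite{LL} verbatim, the sole input being $|B(u,\rho)|=c_n\rho^Q$. One could also note that $K(w)=|w|^{-\lambda}$ belongs to the weak space $L^{Q/\lambda,\infty}(\mathbb H^n)$ and apply Young's convolution inequality in Lorentz spaces, or --- as in \cite{FS} --- observe that away from the diagonal $|u^{-1}v|^{-\lambda}$ is a kernel homogeneous of degree $-\lambda$ under $\delta_\rho$ and invoke the general theory of fractional integration on homogeneous groups.
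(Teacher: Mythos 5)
Your argument is correct, but it is worth noting that the paper does not actually prove this statement: it is quoted as a known result of Folland and Stein \cite{FS} (Proposition 8.7 and Lemma 15.3 there), so there is no internal proof to match against. What you supply is a genuine, self-contained proof by the standard route: duality reduces (1.5) to $\|\mathcal I_\lambda g\|_{r'}\le C\|g\|_s$ with $\tfrac1{r'}=\tfrac1s-\tfrac{Q-\lambda}{Q}$, and the Hedberg splitting plus the Hardy--Littlewood maximal function on the space of homogeneous type $(\mathbb H^n, |u^{-1}v|, du)$ closes the estimate. I checked the exponent bookkeeping: the near-part geometric series converges since $Q>\lambda$, the far-part integral converges exactly when $\lambda s'>Q$, i.e.\ $s<Q/(Q-\lambda)$, which is equivalent to $r>1$; the balancing choice $R=(\|g\|_s/Mg(u))^{s/Q}$ gives $\theta=1-s(Q-\lambda)/Q$ with $\theta r'=s$, so $\|\mathcal I_\lambda g\|_{r'}\le C\|g\|_s^{1-\theta}\|Mg\|_s^{\theta\cdot s/s}\cdot(\dots)$ collapses correctly to $C\|g\|_s$. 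The only inputs you must justify on $\mathbb H^n$ --- the exact volume identity $|B(u,\rho)|=c_n\rho^Q$ from unimodularity and the Jacobian of $\delta_\rho$, and the $L^s$-boundedness of $M$ for $s>1$ --- are both standard and available here (indeed the gauge is even a genuine metric by Cygan's result \cite{C}, which the paper itself invokes). Compared with the route the paper actually leans on for quantitative information, namely the Lieb--Loss layer-cake argument of \S 4.3 in \cite{LL} transplanted to $\mathbb H^n$ in Section 3 to get the explicit bound in Theorem 2, your Hedberg proof is shorter and makes the sharp role of the hypotheses $1<r,s<\infty$ transparent, but yields a less explicit constant; your closing remark correctly identifies that the layer-cake version is the one that produces a bound of the shape (1.4).
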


Here $u=(z,t)$ and $v=(z',t')$, $u^{-1}=(-z,-t)$, and
$d(u,v):=|u^{-1}v|=|uv^{-1}|$ is a left-invariant quasi-metric (cf.
Section 4 in \cite{N}). And without causing any confusion, we denote
$\|\cdot\|_r$ and $\|\cdot\|_s$ as the $L^r$ and $L^s$ norms on
$\mathbb H^n$.

Towards the sharp version of (1.5), Jerison and Lee \cite{JL}
provided sharp constant and maximizer when $\lambda=Q-2$ and
$p=q=2Q/(2Q-\lambda)=2Q/(Q+2)$. Very recently, Frank and Lieb
\cite{FL2} generalized their results to all $0<\lambda<Q$ as the
following theorem.

\begin{theoremnn}[Frank \& Lieb \cite{FL2}]
Let $0<\lambda<Q$ and $r=2Q/(2Q-\lambda)$, then for any $f,g\in
L^r(\mathbb H^n)$
\begin{equation*}
\left|\int\int_{\mathbb H^n\times\mathbb
H^n}\frac{\overline{f(u)}g(v)}{|u^{-1}v|^\lambda}dudv\right|\leq
\left(\frac{\pi^{n+1}}{2^{n-1}n!}\right)^{\frac\lambda
Q}\frac{n!\Gamma((Q-\lambda)/2)}{\Gamma^2((2Q-\lambda)/4)}\|f\|_r\|g\|_r,
\end{equation*}
with equality if and only if
$$f(u)=cH(d(a^{-1}u)),\ g(v)=c'H(d(a^{-1}v))$$
for some $c,c'\in\mathbb C$, $d>0$, $a\in\mathbb H^n$ (unless
$f\equiv 0$ or $g\equiv0$), and
$$H=\frac{1}{\bigg[(1+|z|^2)^2+t^2\bigg]^{(2Q-\lambda)/4}}.$$
\end{theoremnn}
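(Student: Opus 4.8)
The plan is to split the statement into two parts. The first is the existence of an extremal pair, which I would obtain from P.-L.\ Lions' concentration compactness principle; this part in fact works for the whole admissible range $\frac1r+\frac1s+\frac\lambda Q=2$, because a short computation with the dilation $\delta_d$ (whose Jacobian is $d^{Q}$, and under which $|u^{-1}v|$ is $1$-homogeneous) shows that the quotient in $(1.5)$ is invariant under the left translations $u\mapsto a^{-1}u$ and under $u\mapsto\delta_d u$ for \emph{all} such $r,s$, numerator and denominator both scaling by $d^{-2Q+\lambda}$. The second is the identification of the sharp constant and of the extremals, which only makes sense in the conformal case $r=s=2Q/(2Q-\lambda)$ and which I would carry out by transferring the problem to the CR sphere.

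For existence, take a maximizing sequence $(f_k,g_k)$ with $\|f_k\|_r=\|g_k\|_s=1$; since the kernel is nonnegative, replacing $f_k,g_k$ by $|f_k|,|g_k|$ does not decrease the quotient, so we may assume $f_k,g_k\ge0$. One first breaks the two noncompact symmetries by rescaling and translating so that, say, $\int_{\{|u|\le 1\}}|f_k|^r\,du=\tfrac12$, and then runs Lions' alternative on the measures $|f_k|^r\,du$ and $|g_k|^r\,du$. Vanishing is impossible: the supremum of the quotient is finite by the Folland--Stein inequality $(1.5)$ and strictly positive (e.g.\ the profile $H$ already lies in $L^r$ and gives a positive value), whereas a vanishing sequence would drive the quotient to $0$. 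Dichotomy is excluded by the strict subadditivity of the maximal quotient under splitting of mass, a scaling fact resting on $\frac1r+\frac1s>1$. Hence the normalized sequence is tight and stays on a fixed scale; since $\lambda<Q$ the kernel has only a locally integrable singularity, so there the integral operator is effectively compact and the weak limit $(f_0,g_0)$ in $L^r\times L^s$ is a genuine maximizer. This reproves the Frank--Lieb existence statement and extends it to every admissible pair $(r,s)$.

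To pin down the extremals in the conformal case, apply the Cayley transform $\mathcal C\colon\mathbb H^n\to S^{2n+1}\setminus\{\mathrm{pt}\}\subset\mathbb C^{n+1}$: it is a CR conformal equivalence under which $|u^{-1}v|^{-\lambda}$ becomes a constant multiple of $|1-\zeta\cdot\bar\eta|^{-\lambda/2}$, the Jacobian weights being absorbed precisely because $r=2Q/(2Q-\lambda)$ is the conformal weight, so $(1.5)$ with its sharp constant turns into the corresponding inequality on $S^{2n+1}$ with kernel $|1-\zeta\cdot\bar\eta|^{-\lambda/2}$. On the sphere the integral operator $K$ is positive --- its eigenvalues on the $U(n+1)$-spherical harmonics are positive, the largest one carried by the constants --- so $\langle F,KG\rangle\le\tfrac12\big(\langle F,KF\rangle+\langle G,KG\rangle\big)$, which forces the extremal pair to satisfy $f\equiv(\mathrm{const.})\,g$ and reduces the problem to maximizing the quadratic form. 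Its Euler--Lagrange equation, after a standard bootstrap giving $F>0$ smooth, reads
\[
\int_{S^{2n+1}}\frac{F(\eta)}{|1-\zeta\cdot\bar\eta|^{\lambda/2}}\,d\eta=\mu\,F(\zeta)^{r-1},\qquad r-1=\frac{\lambda}{2Q-\lambda}.
\]
One then classifies its positive solutions --- up to the CR automorphism group of $S^{2n+1}$ they are the constant function --- and transports a constant back through $\mathcal C$, obtaining exactly $c\,H(d(a^{-1}u))$ with $H=[(1+|z|^2)^2+t^2]^{-(2Q-\lambda)/4}$. Evaluating the sphere functional at $F\equiv1$ then reduces the sharp constant to the single $U(n+1)$-invariant integral $\int_{S^{2n+1}}|1-\zeta\cdot\bar\eta|^{-\lambda/2}\,d\eta$ of Beta type, which together with the surface area of $S^{2n+1}$ and the Jacobian of $\mathcal C$ yields the stated value.

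The step I expect to be the real obstacle is the rigidity on the sphere, that is, classifying the positive solutions of the integral equation above. Euclidean moving-plane reflections are not CR maps, so one cannot symmetrize naively; one must instead use the moving-sphere version of the argument together with the conformal group of the CR sphere (equivalently, a Kelvin-type transform on $\mathbb H^n$), first extracting the decay of $F$ from the equation and then controlling the sign of the kernel differences through the subelliptic geometry --- this is heavier than the Euclidean argument of Chen--Li--Ou, and in the special case $\lambda=Q-2$ it is essentially the computation of Jerison--Lee. A secondary difficulty is making the strict subadditivity in the concentration compactness step quantitative, by bounding below the interaction energy of two widely separated concentrated profiles.
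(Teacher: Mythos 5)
First, be aware that the paper does not prove this statement: it is quoted verbatim from Frank and Lieb \cite{FL2} as background. What the paper itself establishes (Theorem 1) is only the existence part, generalized to all admissible exponents, and by essentially the route you propose: concentration compactness with Lions' trichotomy, after breaking the dilation and translation symmetries. On that part your sketch is aligned with the paper but has a soft spot. Once you normalize the concentration function at scale $1$ to equal $\tfrac12$ and exclude vanishing and dichotomy, you assert that ``the integral operator is effectively compact'' because the kernel is locally integrable, so the weak limit is a maximizer. That is not true here: the problem is dilation-invariant, hence critical, and tightness does not prevent a fraction $0<k<1$ of the mass from concentrating at a single point while the rest stays diffuse nearby --- a scenario that is neither vanishing nor dichotomy. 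The paper closes exactly this hole with two further ingredients: the Br\'ezis--Lieb lemma (Lemma 3) and Lions' limit-case measure decomposition (Lemma 7), in which $\nu=|I_\lambda(f)|^q+\sum_j C_{p,\lambda,n}^q k_j^{q/p}\delta_{u_j}$, after which the same strict subadditivity $k^{q/p}+(1-k)^{q/p}<1$ kills the atoms. Your argument needs an analogous step; ``local compactness of the kernel'' will not supply it.

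Second, the sharp constant and the classification of extremals are not treated in this paper at all, and your route to them has a genuine gap at precisely the point you flag. Frank and Lieb do pass to the CR sphere via the Cayley transform, and your reduction to the quadratic form via positivity of the kernel is fine, but they do \emph{not} classify the positive solutions of the resulting Euler--Lagrange integral equation by a moving-sphere argument: no such classification is known on the CR sphere, reflections are not CR maps, and the subelliptic Kelvin-type machinery you invoke is not known to produce the kernel comparisons the method requires (this is exactly why the problem was open between Jerison--Lee and \cite{FL2}). Their actual mechanism is different: they use the \emph{existence} of an optimizer, normalize it by a conformal automorphism of the sphere so that its center of mass vanishes, and then exploit the expansion of $|1-\zeta\cdot\bar\eta|^{-\lambda/2}$ in $U(n+1)$-spherical harmonics together with a second-variation inequality to force the centered optimizer to be constant --- the rearrangement-free scheme of \cite{FL1,FL3} adapted to the CR setting. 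As written, your outline reduces the theorem to an unproved rigidity statement rather than to the argument that actually closes it; only the final evaluation at $F\equiv1$ (a Beta-type integral) and the $f\equiv(\text{const.})\,g$ reduction survive unchanged.
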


Their results also justified Branson, Fontana, and Morpurgo's guess
\cite{BFM} about the maximizer $H$. However, little about maximizers and sharp constants are known when $r\ne s$. For more information, we refer to Frank and Lieb \cite{FL2} with some historical note on this subject. In \cite{HLZ}, two weighted versions of (1.5) are studied, whose maximizers are also investigated therein. Some other related results concerning the sharp constants of Moser-Trudinger inequalities on the Heisenberg group and $\mathbb C^n$ are \cite{CL1, CL2}.

The major theorem of this paper is to prove the existence of
maximizers for (1.5) when $r\neq s$, and we state in terms of
fraction integral form: Define for $f\in L^p(\mathbb H^n)$
$$I_\lambda(f)(u)=\int_{\mathbb H^n}\frac{f(v)}{|u^{-1}v|^\lambda}dv,$$
for $u\in\mathbb H^n$. Then, we have the sharp constant
\begin{equation}
C_{p,\lambda,n}=\sup_{\|f\|_p=1}\|I_\lambda(f)\|_q<\infty
\end{equation}
under the condition that
\begin{equation}
\frac1q=\frac1p-\frac{Q-\lambda}{Q}.
\end{equation}

We have the following maximization theorem, and one only needs to apply H\"{o}lder's inequality to solve the maximizing problem for
(1.5) (by plugging in $s=p$ and $r=q'$).

\begin{theorem}[Existence of maximizers]
Let $\{f_j\}$ be a maximizing sequence of problem (1.6) and (1.7),
then there exists $\{u_j\}\subseteq\mathbb H^n$ and
$\{d_j\}\subseteq(0,\infty)$ such that the new maximizing
sequence $\{h_j\}$ defined by
\begin{equation*}
h_j(u)=\frac{1}{d_j^{Q/p}}f_j\left(\frac{u_ju}{d_j}\right)
\end{equation*}
is relatively compact in $L^p(\mathbb H^n)$. In particular, there
exists a maximum of (1.6) and (1.7).
\end{theorem}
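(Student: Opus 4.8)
The plan is to implement the concentration compactness principle of Lions adapted to the Heisenberg group. Starting from a maximizing sequence $\{f_j\}$ with $\|f_j\|_p=1$ and $\|I_\lambda(f_j)\|_q\to C_{p,\lambda,n}$, I would first use the dilation and translation invariance of the functional to normalize. Specifically, the quantity $\|I_\lambda(f)\|_q/\|f\|_p$ is invariant under $f(u)\mapsto d^{-Q/p}f(u_j u/d)$ — this is exactly where the relation (1.7) between $p$ and $q$ enters, since the scaling exponents must match. I would apply these symmetries to ensure the ``concentration functions'' $Q_j(R)=\sup_{u\in\mathbb H^n}\int_{d(u,v)\le R}|f_j(v)|^p\,dv$ are normalized, e.g. so that $Q_j(1)=1/2$ for all $j$ (or some fixed level), which fixes both the scale $d_j$ and the center $u_j$; the rescaled functions are the $h_j$ in the statement.

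Next I would invoke the concentration compactness lemma: passing to a subsequence, the measures $|h_j|^p\,du$ (which have total mass $1$) must exhibit exactly one of three behaviors — \emph{vanishing}, \emph{dichotomy}, or \emph{compactness (tightness)}. The heart of the proof is to rule out the first two. \textbf{Vanishing} is excluded immediately by the normalization $Q_j(1)=1/2$, which forces a fixed positive amount of mass to sit in a unit ball. \textbf{Dichotomy} is the main obstacle: one assumes the mass splits as $\alpha\in(0,1)$ concentrating near the origin and $1-\alpha$ escaping to infinity, writes $h_j\approx h_j^{(1)}+h_j^{(2)}$ with disjoint supports separated by a growing distance, and must show this is incompatible with $\{h_j\}$ being maximizing. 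Here I would estimate $\|I_\lambda(h_j)\|_q^q$ from above using that the cross terms $\int\int \overline{h_j^{(1)}(u)}\,|u^{-1}v|^{-\lambda}\,h_j^{(2)}(v)$-type contributions vanish in the limit (because the kernel decays and the supports separate), leading to a strict subadditivity inequality of the form
\begin{equation*}
C_{p,\lambda,n} = \lim_j \|I_\lambda(h_j)\|_q \le \left(\alpha^{q/p}+(1-\alpha)^{q/p}\right)^{1/q} C_{p,\lambda,n} < C_{p,\lambda,n}
\end{equation*}
using $q>p$ (so the exponent $q/p>1$ makes the bracket strictly less than $1$ for $\alpha\in(0,1)$), a contradiction. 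The technical work is making the splitting precise and controlling the $L^q$-norm of a sum of two nearly-disjointly-supported pieces, for which I would use the $q\ge p>1$ ranges and a Brezis--Lieb type decomposition together with the boundedness $C_{p,\lambda,n}<\infty$ from the Folland--Stein theorem.

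Once dichotomy and vanishing are eliminated, compactness holds: for every $\varepsilon>0$ there is $R$ with $\int_{d(0,v)\le R}|h_j(v)|^p\,dv\ge 1-\varepsilon$ uniformly in $j$. Then $\{h_j\}$ is tight, and combined with weak-$L^p$ convergence (after a further subsequence) to some $h$ with $\|h\|_p\le 1$, tightness upgrades weak to strong convergence $h_j\to h$ in $L^p(\mathbb H^n)$, so $\|h\|_p=1$. Finally I would check that $I_\lambda$ is continuous from $L^p$ to $L^q$ — which follows from the Folland--Stein inequality (1.5)/(1.6) applied to $h_j-h$ — to conclude $\|I_\lambda(h)\|_q=\lim_j\|I_\lambda(h_j)\|_q=C_{p,\lambda,n}$, so $h$ is a maximizer and $\{h_j\}$ is relatively compact. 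The one point requiring a little care is the passage from weak convergence plus tightness to strong convergence; I would handle it by splitting $\mathbb H^n$ into the large ball $B_R$ (where local compactness of the embedding, via an Arzelà--Ascoli or Rellich-type argument for the fractional operator, gives strong convergence) and its complement (where the tail is uniformly small by tightness).
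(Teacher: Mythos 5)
Your treatment of the first two cases matches the paper: the dilation normalization $Q_j(1)=1/2$ to kill vanishing, and the strict subadditivity $k^{q/p}+(1-k)^{q/p}<1$ (valid since $q/p>1$ by (1.7)) applied to the split sequence to kill dichotomy, are exactly the paper's Remark after Lemma 4 and Proposition 5. The gap is in your final step. Tightness plus weak $L^p$-convergence does \emph{not} upgrade to strong $L^p$-convergence: there is no Rellich-type compact embedding available here, since a bounded sequence in $L^p(B_R)$ need not have any strongly convergent subsequence, and the operator $I_\lambda\colon L^p\to L^q$ is not compact (even localized) precisely because the exponent relation (1.7) makes the problem scale-invariant. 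Concretely, the normalization $Q_j(1)=1/2$ prevents the \emph{entire} mass from concentrating at a single point, but it does not prevent a fraction $k\in(0,1)$ of the mass from concentrating at a point while the rest stays spread out over a fixed ball; such a sequence is tight, converges weakly to a limit $f$ with $\|f\|_p^p=1-k<1$, and is not relatively compact. Your parenthetical appeal to ``an Arzel\`a--Ascoli or Rellich-type argument for the fractional operator'' does not close this, because the obstruction is concentration, not lack of equicontinuity.

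The paper closes this gap with a second concentration-compactness ingredient (Lemma 7, Lions' ``limit case'' lemma): after establishing $I_\lambda(f_j)\to I_\lambda(f)$ a.e.\ (Corollary 6, itself a nontrivial convergence-in-measure argument), one passes to weak-$*$ limits $|f_j|^p\to\mu$ and $|I_\lambda(f_j)|^q\to\nu$ and decomposes
\begin{equation*}
\nu=|I_\lambda(f)|^q+\sum_j C_{p,\lambda,n}\,k_j^{q/p}\delta_{u_j},\qquad \mu\ge|f|^p+\sum_j k_j\delta_{u_j},
\end{equation*}
so that the possible point concentrations are made explicit. If $\|f\|_p^p=k<1$, then $\sum_j k_j\le 1-k$ and the same strict subadditivity $k^{q/p}+(1-k)^{q/p}<1$ forces $\nu(\mathbb H^n)<C_{p,\lambda,n}^q$, contradicting $\nu(\mathbb H^n)=C_{p,\lambda,n}^q$. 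Hence $\|f\|_p=1$, and strong convergence follows from weak convergence together with convergence of norms (uniform convexity of $L^p$). You need some version of this third application of strict subadditivity; without it the proof is incomplete.
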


\begin{remark}
Under a special case when $p=2Q/(2Q-\lambda)$ and $q=2Q/\lambda$, we derive from the above theorem a different approach to prove the existence of maximizers as shown in \S 4 of \cite{FL2}.
\end{remark}

Having confirmed existence of maximizers, furthermore we give upper bounds for sharp constants as follows.

\begin{theorem}[Upper bounds of sharp constants]
In (1.5), $C$ can be chosen as
\begin{equation*}
\frac{Q|B_1(0)|^\frac\lambda
Q}{rs(Q-\lambda)}\left[\left(\frac{\lambda/Q}{1-1/r}\right)^\frac\lambda
Q+\left(\frac{\lambda/Q}{1-1/s}\right)^\frac\lambda Q\right],
\end{equation*}
in which $B_1(0)$ is the unit Heisenberg ball, that is,
$B_1(0)\subseteq\mathbb H^n=\{u\in\mathbb H^n||u|<1\}$ with $|B_1(0)|$ as
its volume. Precisely, from \cite{CL1},
$$|B_1(0)|=\frac{2\pi^\frac{Q-2}{2}\Gamma(1/2)\Gamma((Q+2)/4)}{(Q-2)\Gamma((Q-2)/2)\Gamma((Q+4)/4)}.$$
\end{theorem}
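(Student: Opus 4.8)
\emph{Plan of proof.} The plan is to transplant, essentially verbatim, the argument Lieb and Loss give for the Euclidean bound (1.4), and to check that every geometric ingredient it uses is available on $\mathbb H^n$. First I would reduce to $f,g\ge 0$ (replace $f,g$ by $|f|,|g|$). Then I would apply the layer-cake (bathtub) formula to $f$, to $g$, and to the kernel, the last one through
\[
|u^{-1}v|^{-\lambda}=\lambda\int_0^\infty \rho^{-\lambda-1}\,\chi_{\{d(u,v)<\rho\}}\,d\rho .
\]
This rewrites the double integral in (1.5) as
\[
\lambda\int_0^\infty\!\!\int_0^\infty\!\!\int_0^\infty \rho^{-\lambda-1}\,M(a,b,\rho)\,da\,db\,d\rho,
\qquad
M(a,b,\rho)=\bigl|\{(u,v): f(u)>a,\ g(v)>b,\ d(u,v)<\rho\}\bigr|,
\]
and the whole estimate then rests on the elementary inequality, with $E_a:=\{f>a\}$ and $F_b:=\{g>b\}$,
\[
M(a,b,\rho)\ \le\ \min\bigl\{\,|E_a|\,|F_b|,\ |E_a|\,|B_\rho(0)|,\ |F_b|\,|B_\rho(0)|\,\bigr\}.
\]

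Each of the three bounds comes from integrating out one of the two variables in the slice $\{d(u,v)<\rho\}$, and this is the only place the Heisenberg structure is used. It enters through exactly three facts: left-invariance of Haar measure, so that for fixed $u$ the fiber $\{v:d(u,v)<\rho\}=u\cdot B_\rho(0)$ has measure $|B_\rho(0)|$; the symmetry $d(u,v)=d(v,u)$ of the quasi-metric (which holds since $|w|=|w^{-1}|$), so the fiber over a fixed $v$ also has measure $|B_\rho(0)|$; and the homogeneity $|B_\rho(0)|=|\delta_\rho B_1(0)|=\rho^{Q}|B_1(0)|$, which is precisely what replaces the volume of Euclidean balls in \cite{LL}. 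Crucially, the quasi-triangle inequality for $d$ (and its unspecified constant) is never invoked, which is why the Euclidean constant transfers with $N$ replaced by $Q$ and the Euclidean unit ball replaced by $B_1(0)$, whose volume is the one recorded from \cite{CL1}. So I expect this Heisenberg-specific step to be routine.

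After substituting the bound on $M$, the $\rho$-integral is evaluated explicitly: for fixed $a,b$ it equals $\dfrac{Q|B_1(0)|^{\lambda/Q}}{Q-\lambda}\,\min\{|E_a|,|F_b|\}\,\max\{|E_a|,|F_b|\}^{1-\lambda/Q}$, so it remains to prove
\[
\int_0^\infty\!\!\int_0^\infty \min\{|E_a|,|F_b|\}\,\max\{|E_a|,|F_b|\}^{1-\lambda/Q}\,da\,db
\ \le\ \frac{1}{rs}\Bigl[\bigl(\tfrac{\lambda/Q}{1-1/r}\bigr)^{\lambda/Q}+\bigl(\tfrac{\lambda/Q}{1-1/s}\bigr)^{\lambda/Q}\Bigr]\|f\|_r\|g\|_s .
\]
I anticipate that this one-dimensional level-set estimate, rather than anything about $\mathbb H^n$, is the main obstacle: one must split the level plane along $\{|E_a|\lessgtr|F_b|\}$, optimize the radial cutoff separately on the two pieces, and apply H\"older's inequality \emph{without} collapsing $\min$ and $\max$ to a single power (a naive bound of that kind already fails when $r\ne s$, since it would force a quantity like $\int_0^\infty|E_a|^{1/r}\,da$ that is not controlled by $\|f\|_r$). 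This computation is carried out in \cite[\S 4.3]{LL}; reproducing it mutatis mutandis, together with the fiber-measure identities above, yields the stated admissible value of $C$. As a sanity check, testing $f=g=\chi_E$ shows both sides scale as $|E|^{2-\lambda/Q}$ and the claimed constant is $\ge$ the one obtained for characteristic functions, as it must be.
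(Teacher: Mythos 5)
Your proposal is correct and is exactly the paper's approach: the paper's entire proof of Theorem 2 consists of citing \S 4.3 of \cite{LL} and inserting the volume of the Heisenberg unit ball from \cite{CL1}, and your argument is a faithful (and more detailed) transplantation of that Lieb--Loss layer-cake proof, correctly isolating the only Heisenberg-specific ingredients (left-invariance of Haar measure, $|w|=|w^{-1}|$, and $|B_\rho(0)|=\rho^Q|B_1(0)|$) and correctly noting that the quasi-triangle inequality is never needed. No gaps.
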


The following two sections are devoted to proving Theorems 1 and 2, respectively.

\section{Proof of Theorem 1}

Let us first outline the difficulties to be faced, the loss of
compactness is caused by a large group of actions consisting of
dilations and translations. One can use symmetrization to exclude
some actions and ensure the existence of maximizers on $\mathbb R^N$. However, symmetrization can not be expected to work on $\mathbb H^n$ because of its dilation structure. Therefore, different approach is needed to study the compactness here. It is worthwhile to remark now that in the process we frequently extract to subsequences of the maximizing sequence as needed. 

A crucial lemma that refines Fatou's lemma is due to Br\'ezis and Lieb \cite{BL}.

\begin{lemma}[Br\'ezis-Lieb lemma]
Let $0<p<\infty$, $\{f_j\}\subseteq L^p(\mathbb H^n)$ satisfying
$\|f\|_p\le C$ and $f_j\rightarrow f$ a.e., then
$$\lim_{j\rightarrow\infty}\int_{\mathbb H^n}\bigg||f_j(u)|^p-|f(u)-f_j(u)|^p-|f(u)|^p\bigg|du=0.$$
\end{lemma}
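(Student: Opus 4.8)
The plan is to reproduce the classical Br\'ezis--Lieb argument \cite{BL}, first observing that nothing about the group structure of $\mathbb H^n$ enters: the assertion is a statement about an arbitrary positive measure space, and here $du=dz\,dt$ is just Lebesgue measure on $\mathbb R^{2n+1}$, which is $\sigma$-finite, so Fatou's lemma and dominated convergence are available without qualification. I read the hypothesis ``$\|f\|_p\le C$'' as the uniform bound $\sup_j\|f_j\|_p\le C$; Fatou's lemma then already gives $f\in L^p(\mathbb H^n)$ with $\|f\|_p\le C$, hence $\sup_j\|f_j-f\|_p\le 2C$. The mechanism of the proof is that the integrand $W_j:=\big||f_j|^p-|f-f_j|^p-|f|^p\big|$ tends to $0$ pointwise a.e.\ but is not dominated by a fixed $L^1$ function, so one cannot apply dominated convergence to it directly; the device is to truncate first and remove the truncation parameter afterwards.

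First I would record the elementary inequality: for every $0<p<\infty$ and every $\varepsilon>0$ there is a constant $C_\varepsilon=C(\varepsilon,p)$ such that
\[
\big||a+b|^p-|a|^p\big|\ \le\ \varepsilon\,|a|^p+C_\varepsilon\,|b|^p\qquad\text{for all }a,b\in\mathbb C .
\]
For $0<p\le1$ this holds with $\varepsilon=0$, $C_\varepsilon=1$, directly from the two-sided subadditivity of $t\mapsto t^p$. For $p>1$ one may reduce by homogeneity to the case $|a|=1$ and split according to whether $|b|$ is small (where the left side is $\le p(1+|b|)^{p-1}|b|\le\varepsilon$ once $|b|$ is below a threshold depending on $\varepsilon,p$) or large (where it is $\le(1+|b|)^p+1\le C_\varepsilon|b|^p$); the necessity of the $\varepsilon|a|^p$ term is exactly the failure of any bound $\big||a+b|^p-|a|^p\big|\le C|b|^p$ uniform in $a,b$ when $p>1$.

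Then, applying the inequality with $a=f_j-f$ and $b=f$, one gets $W_j\le\varepsilon|f_j-f|^p+(C_\varepsilon+1)|f|^p$ pointwise. Introduce the truncation $G_j^\varepsilon:=\big(W_j-\varepsilon|f_j-f|^p\big)_+$. Then $0\le G_j^\varepsilon\le (C_\varepsilon+1)|f|^p\in L^1(\mathbb H^n)$, while $G_j^\varepsilon\to0$ $du$-a.e., because $f_j\to f$ a.e.\ forces $W_j\to0$ a.e.\ (each of the three terms converges and the combination tends to $\big||f|^p-0-|f|^p\big|=0$) and $0\le G_j^\varepsilon\le W_j$. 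Dominated convergence gives $\int_{\mathbb H^n}G_j^\varepsilon\,du\to0$; since $W_j\le G_j^\varepsilon+\varepsilon|f_j-f|^p$ pointwise and $\|f_j-f\|_p^p\le(2C)^p$, this yields $\limsup_{j\to\infty}\int_{\mathbb H^n}W_j\,du\le\varepsilon(2C)^p$, and letting $\varepsilon\downarrow0$ completes the proof. There is no serious obstacle: the only points needing care are the pointwise inequality in the range $p>1$ and the observation that dominated convergence must be applied to $G_j^\varepsilon$ rather than to $W_j$ itself, the parameter $\varepsilon$ being sent to $0$ only at the end; everything else is Fatou's lemma and dominated convergence on $(\mathbb H^n,du)$.
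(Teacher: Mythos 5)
Your proof is correct and is exactly the classical Br\'ezis--Lieb argument (the $\varepsilon$-truncation $G_j^\varepsilon=(W_j-\varepsilon|f_j-f|^p)_+$ dominated by $(C_\varepsilon+1)|f|^p$, followed by letting $\varepsilon\downarrow0$); the paper gives no proof of this lemma at all, simply citing \cite{BL}, and what you have written is precisely the argument of that reference, so the two approaches coincide. The only cosmetic points are that the hypothesis ``$\|f\|_p\le C$'' should indeed be read as $\sup_j\|f_j\|_p\le C$, as you do, and that for $0<p<1$ one bounds $\int|f_j-f|^p\le\int|f_j|^p+\int|f|^p\le 2C^p$ rather than via the triangle inequality for $\|\cdot\|_p$ -- either way a uniform bound, which is all the final step needs.
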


Now suppose that we are given a maximizing sequence $\{f_j\}$ for (1.6) and (1.7), and without loss of generality we assume that $\|f_j\|_p=1$, our goal is to generate $\{h_j\}$ as stated in Theorem 1. To recover from the loss of compactness from dilations and translations, we first need the following concentration compactness lemma from \cite{Lio1}, we provide a proof here for completeness.

\begin{lemma}
For simplicity, we denote by $\rho_j=|f_j|^p$ as a nonnegative measure on $\mathbb H^n$, thus $\int_{\mathbb H^n}\rho_j=1$. Then, there exists a subsequence of $\{\rho_j\}$ (and we still denote as $\{\rho_j\}$) such that one of the following holds.

\begin{itemize}
\item[(i)] For all $R>0$, we have
$$\lim_{j\rightarrow\infty}\left(\sup_{u\in\mathbb H^n}\int_{B_R(u)}\rho_j\right)=0.$$
\item[(ii)] There exists $\{u_j\}\subseteq\mathbb H^n$ such that for each $\epsilon>0$ small enough, we can find $R_0>0$ with
$$\int_{B_{R_0}(u_j)}\rho_j\ge1-\epsilon$$
for all $j\in\mathbb N$.
\item[(iii)] There exists $0<k<1$ such that for each $\epsilon>0$ small enough, we can find $R_0>0$ and $\{u_j\}\subseteq\mathbb H^n$ such that given any $R\ge R_0$, there exist $\rho_j^1$ and $\rho_j^2$ as two nonnegative measures satisfying 
\begin{enumerate}[1.~]
\item $\rho^1_j+\rho^2_j=\rho_j$.
\item $\text{supp}(\rho^1_j)\subseteq B_R(u_j)$ and $\text{supp}(\rho^2_j)\subseteq B^c_R(u_j)$.
\item $$\limsup_{j\rightarrow\infty}\left(\left|k-\int_{\mathbb H^n}\rho_j^1\right|+\left|(1-k)-\int_{\mathbb H^n}\rho_j^2\right|\right)\le\epsilon.$$
\end{enumerate}
\end{itemize}
\end{lemma}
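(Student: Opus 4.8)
The plan is to follow the original concentration compactness argument of Lions, with the Heisenberg balls $B_R(u)=\{v\in\mathbb H^n:|u^{-1}v|<R\}$ in place of Euclidean balls; the only structure that is actually used is that $d(u,v)=|u^{-1}v|$ is a left-invariant (quasi-)metric and that $B_R(u)\uparrow\mathbb H^n$ as $R\to\infty$. The central object is the concentration function
$$\mathcal Q_j(R):=\sup_{u\in\mathbb H^n}\int_{B_R(u)}\rho_j,\qquad R>0,$$
which for each $j$ is nondecreasing on $(0,\infty)$, takes values in $[0,1]$, and tends to $1$ as $R\to\infty$ (since $\rho_j$ is a probability measure). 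Applying the Bolzano-Weierstrass theorem along an enumeration of the positive rationals together with a diagonal extraction, and passing to the resulting subsequence (which I do not relabel), I obtain a nondecreasing $\mathcal Q:(0,\infty)\to[0,1]$ with $\mathcal Q_j(R)\to\mathcal Q(R)$ at every continuity point of $\mathcal Q$, hence for all but countably many $R$. Set $k:=\lim_{R\to\infty}\mathcal Q(R)=\sup_{R>0}\mathcal Q(R)\in[0,1]$. I will show that the cases $k=0$, $k=1$, and $0<k<1$ produce alternatives (i), (ii), and (iii) respectively. The first is immediate: $k=0$ forces $\mathcal Q\equiv 0$, so for each $R>0$ a continuity point $R'\ge R$ gives $\limsup_j\mathcal Q_j(R)\le\limsup_j\mathcal Q_j(R')=0$, that is, $\sup_u\int_{B_R(u)}\rho_j\to 0$, which is (i).

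Next suppose $k=1$. I first fix a continuity point $R_*$ with $\mathcal Q(R_*)>\tfrac12$ and, for all sufficiently large $j$, choose $u_j$ with $\int_{B_{R_*}(u_j)}\rho_j>\tfrac12$, taking the remaining finitely many $u_j$ arbitrarily; this pins down the sequence $\{u_j\}$ once and for all, independently of $\epsilon$. Now given $\epsilon>0$ small, pick a continuity point $R_1$ with $\mathcal Q(R_1)>1-\epsilon$, so that for $j$ large there is $v_j$ with $\int_{B_{R_1}(v_j)}\rho_j>1-\epsilon$. Since $B_{R_*}(u_j)$ and $B_{R_1}(v_j)$ each carry more than half of the unit mass $\rho_j$, they must intersect, so the (quasi-)triangle inequality bounds $d(u_j,v_j)$ and therefore $B_{R_1}(v_j)\subseteq B_{R_0}(u_j)$ for some radius $R_0=R_0(\epsilon)$ independent of $j$, whence $\int_{B_{R_0}(u_j)}\rho_j>1-\epsilon$ for $j$ large. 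Enlarging $R_0$ to absorb the finitely many remaining $j$ (for each of which $\int_{B_R(u_j)}\rho_j\to 1$ as $R\to\infty$) yields alternative (ii).

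Finally suppose $0<k<1$; this is the $k$ appearing in alternative (iii). Given $\epsilon>0$, choose a continuity point $R_0$ with $\mathcal Q(R_0)>k-\epsilon$ and, for each $j$, a point $u_j$ with $\int_{B_{R_0}(u_j)}\rho_j>\mathcal Q_j(R_0)-\epsilon$; for $j$ large the right-hand side exceeds $k-3\epsilon$. For any $R\ge R_0$, let $\rho^1_j$ be the restriction of $\rho_j$ to the closed ball $\overline{B_R(u_j)}$ and $\rho^2_j$ its complementary part, so that the first two conditions hold (the bounding sphere being $\rho_j$-null). Then $\int_{\mathbb H^n}\rho^1_j=\int_{B_R(u_j)}\rho_j\ge\int_{B_{R_0}(u_j)}\rho_j\ge k-3\epsilon$ for $j$ large, while $\int_{B_R(u_j)}\rho_j\le\mathcal Q_j(R)\le\mathcal Q_j(R'')$ for any continuity point $R''>R$ and $\mathcal Q_j(R'')\to\mathcal Q(R'')\le k$, so that $\limsup_j\int_{B_R(u_j)}\rho_j\le k$. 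Since $\int_{\mathbb H^n}\rho^2_j=1-\int_{\mathbb H^n}\rho^1_j$, it follows that $\limsup_j\big(|k-\int_{\mathbb H^n}\rho^1_j|+|(1-k)-\int_{\mathbb H^n}\rho^2_j|\big)\le 6\epsilon$, and rerunning the construction with $\epsilon/6$ in place of $\epsilon$ gives the third condition.

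The diagonal extraction of $\mathcal Q$ and the bookkeeping with continuity points are entirely routine. The one step that requires care is the passage, in the case $k=1$, from a centre $u_j$ that a priori depends on $\epsilon$ to a single sequence $\{u_j\}$ valid for all $\epsilon$ at once; this is exactly what forces the preliminary choice of centres at mass level $\tfrac12$ followed by the intersection-of-balls argument, and it is also the only place where the metric geometry of $\mathbb H^n$, as opposed to the purely measure-theoretic data $\int_{\mathbb H^n}\rho_j=1$, is used. Everything else transfers from the Euclidean setting essentially verbatim.
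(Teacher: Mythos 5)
Your proposal is correct and follows essentially the same route as the paper: Lions' concentration function $Q_j(R)=\sup_u\int_{B_R(u)}\rho_j$, a Helly-type selection to get a monotone limit $Q$ with $k=\lim_{R\to\infty}Q(R)$, and the trichotomy $k=0$, $k=1$, $0<k<1$, with the intersecting-balls/triangle-inequality argument pinning down an $\epsilon$-independent sequence of centres in the case $k=1$. The only differences are cosmetic (mass level $\tfrac12$ instead of $\tfrac34$, and bounding $\limsup_j\int_{B_R(u_j)}\rho_j\le k$ via monotonicity at continuity points rather than an auxiliary sequence $R_j\to\infty$).
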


\begin{proof}[Proof of Lemma 4]

We define the Levy concentration function for $\rho_j$ on $\mathbb H^n$ as
$$Q_j(R)=\sup_{u\in\mathbb H^n}\int_{B_R(u)}\rho_j$$
for $R\in[0,\infty]$. It is obvious that $Q_j\in\text{BV}[0,\infty]$ is nonnegative and nondecreasing with 
$$Q_j(0)=0\text{, and }Q_j(\infty)=1$$ 
for all $j\in\mathbb N$. Therefore, we can find a nonnegative and nondecreasing function $Q\in\text{BV}[0,\infty]$ such that by passing to a subsequence of $\{Q_j\}$ if necessary (and we still denote without causing any confusion by $\{Q_j\}$)
$$\lim_{j\rightarrow\infty}Q_j(R)=Q(R)$$
for all $R\in[0,\infty)$.

Now we write
$$k=\lim_{R\rightarrow\infty}Q(R),$$
and thus $0\le k\le1$.

\begin{itemize}
\item[(i)] If $k=0$, then easily we have
$$\lim_{j\rightarrow\infty}\left(\sup_{u\in\mathbb H^n}\int_{B_R(u)}\rho_j\right)=0$$
for all $R>0$.

\item[(ii)] If $k=1$, then we first choose $R_1>0$ such that $Q(R_1)>\frac34$, and for fixed $0<\epsilon<\frac14$, we choose $R_2$ such that $Q(R_2)>1-\frac\epsilon2>\frac34$. Because 
$$Q_j(R)=\sup_{u\in\mathbb H^n}\int_{B_R(u)}\rho_j,$$
we let $u_j,v_j\in\mathbb H^n$ satisfy
$$\int_{B_{R_1}(u_j)}\rho_j\ge Q_j(R_1)-\frac1j$$
and 
$$\int_{B_{R_2}(v_j)}\rho_j\ge Q_j(R_2)-\frac1j$$
for all $j\in\mathbb N$. We compute that
\begin{eqnarray*}
&&\int_{B_{R_1}(u_j)}\rho_j+\int_{B_{R_2}(v_j)}\rho_j\\
&\ge&Q_j(R_1)+Q_j(R_2)-\frac2j+o(1)\\
&>&1\\
&=&\int_{\mathbb H^n}\rho_j
\end{eqnarray*}
for $j$ large, which means that $B_{R_1}(u_j)\cap B_{R_2}(v_j)\ne\emptyset$. Therefore, 
$$B_{R_2}(v_j)\subseteq B_{R_1+2R_2}(u_j),$$
in which we use the fact that the quasi-metric defined as $d(u,v)=|u^{-1}v|$ on $\mathbb H^n$ is a metric, see \cite{C}, also \S4 in \cite{SW}, and we continue to take advantage of this in the following when estimating distances. Now compute that
$$\int_{B_{R_1+2R_2}(u_j)}\rho_j\ge Q_j(R_2)-\frac1j\ge Q(R_2)+o(1)-\frac1j\ge1-\epsilon$$
for $j>j(\epsilon)$. Furthermore, we select $R_3$ such that
$$\int_{B_{R_3}(0)}\rho_j\ge1-\epsilon$$
for $j=1,2,...,j(\epsilon)$. Then, one arrives at the conclusion in (ii) by taking $R_0=R_1+2R_2+R_3$. 

\item[(iii)] If $0<k<1$, then $\forall\epsilon>0$, choose $R_0$ such that $Q(R_0)>k-\frac\epsilon8$. For $j>j(\epsilon)$, we have
$$k-\frac\epsilon4<Q_j(R_0)<k+\frac\epsilon4,$$
and therefore, there is $\{u_j\}$ such that
$$\int_{B_{R_0}(u_j)}\rho_j>k-\frac\epsilon2.$$

Similarly, we can enlarge $j(\epsilon)$ if necessary to get a sequence $\{R_j\}$ with $R_j\rightarrow\infty$ such that
$$\int_{B_{R_j}(u_j)}\rho_j<k+\frac\epsilon2$$
for all $j>j(\epsilon)$.

For any given $R\ge R_0$, we may assume $R\le R_j$ for all $j\in\mathbb N$. This means that there exists $\{u_j\}\subseteq\mathbb H^n$ such that
$$k-\frac\epsilon2\le\int_{B_{R_0}(u_j)}\rho_j\le\int_{B_{R}(u_j)}\rho_j\le\int_{B_{R_j}(u_j)}\rho_j\le k+\frac\epsilon2.$$

Set
$$\rho_j^1=\rho_j\chi_{B_{R}(u_j)}\text{ and }\rho_j^2=\rho_j\chi_{B^c_{R}(u_j)},$$
thus,
\begin{eqnarray*}
&&\left|k-\int_{\mathbb H^n}\rho_j^1\right|+\left|(1-k)-\int_{\mathbb H^n}\rho_j^2\right|\\
&=&\left|k-\int_{B_{R}(u_j)}\rho_j\right|+\left|(1-k)-\int_{B^c_{R}(u_j)}\rho_j\right|\\
&\le&\epsilon.
\end{eqnarray*}
\end{itemize}
\end{proof}

\begin{remark}
Back to our maximizing problem, let $\{f_j\}\subseteq L^p(\mathbb H^n)$ be a maximizing sequence of (1.6) and (1.7) satisfying $\|f_j\|=1$, then with the help of dilations (as $\{d_j\}$ in Theorem 1), we can always assume that 
\begin{equation*}
Q_j(1)=\sup_{u\in\mathbb H^n}\int_{B_1(u)}\rho_j=\frac12
\end{equation*}
as defined in the proof of Lemma 4 without affecting our maximizing problem since (1.6) is dilation-invariant, therefore we are able to eliminate the case in (i). Next we prove that the case in (iii) can not happen, either, from which we only need to focus on the case in (ii).
\end{remark}

\begin{proposition}
Let $\{f_j\}\subseteq L^p(\mathbb H^n)$ be a maximizing sequence of (1.6) and (1.7) satisfying $\|f_j\|=1$, then (iii) in Lemma 4 can not occur.
\end{proposition}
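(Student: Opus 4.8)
Here is the plan.

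The plan is to rule out dichotomy by showing that it would force the strict inequality $C_{p,\lambda,n}^q\le C_{p,\lambda,n}^q\big(k^{q/p}+(1-k)^{q/p}\big)$; since $q>p$ by (1.7) and $0<k<1$, the map $t\mapsto t^{q/p}$ is strictly convex, so $k^{q/p}+(1-k)^{q/p}<k+(1-k)=1$, and as $C_{p,\lambda,n}$ is finite by (1.6) and clearly positive, this is a contradiction. To set it up I would assume (iii) holds with its fixed constant $0<k<1$, fix a small $\epsilon>0$, take $R_0=R_0(\epsilon)$ and $\{u_j\}$ as in Lemma 4, and recall from the proof of Lemma 4(iii) that for $j$ large one has $\int_{B_{R_0}(u_j)}\rho_j>k-\epsilon/2$ and $\int_{B_R(u_j)}\rho_j<k+\epsilon/2$ for every $R\in[R_0,R_j]$ with $R_j\to\infty$. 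For a large radius $R$ (which will be sent to $\infty$ only after $j$) I split $f_j=f_j^1+w_j+f_j^2$ with $f_j^1=f_j\chi_{B_{R_0}(u_j)}$, $w_j=f_j\chi_{B_R(u_j)\setminus B_{R_0}(u_j)}$ and $f_j^2=f_j\chi_{B^c_R(u_j)}$; for $j$ large this gives $\|f_j^1\|_p^p\le k+\epsilon$, $\|f_j^2\|_p^p\le 1-k+\epsilon$ and $\|w_j\|_p^p\le\epsilon$, so by (1.6) $\|I_\lambda w_j\|_q\le C_{p,\lambda,n}\epsilon^{1/p}$, $\|I_\lambda f_j^1\|_q^q\le C_{p,\lambda,n}^q(k+\epsilon)^{q/p}$ and $\|I_\lambda f_j^2\|_q^q\le C_{p,\lambda,n}^q(1-k+\epsilon)^{q/p}$. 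The point of the large gap $B_R(u_j)\setminus B_{R_0}(u_j)$ is that it carries mass at most $\epsilon$, so discarding it is harmless.

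The heart of the argument is to show that $I_\lambda f_j^1$ and $I_\lambda f_j^2$ concentrate, up to errors that tend to $0$ as $R\to\infty$ uniformly in $j$, on the \emph{disjoint} regions $B_M(u_j)$ and $B^c_M(u_j)$, where $M:=\sqrt{R_0R}$ (so $R_0\ll M\ll R$ for $R$ large). For $I_\lambda f_j^2$: since $f_j^2$ is supported in $B^c_R(u_j)$, for $u\in B_M(u_j)\subseteq B_{R/2}(u_j)$ and $v$ in the dyadic annulus $A_m=\{2^mR\le d(v,u_j)<2^{m+1}R\}$ one has $d(u,v)\ge 2^{m-1}R$; applying H\"older's inequality on each $A_m$ with $|A_m|\approx (2^mR)^Q$ and $\|f_j\|_p\le1$, then summing the resulting geometric series --- which converges precisely because $Q/p'=Q(1-1/p)<\lambda$, an inequality equivalent to $q<\infty$ in (1.7) --- gives $\|I_\lambda f_j^2\|_{L^\infty(B_M(u_j))}\le CR^{Q/p'-\lambda}$, hence $\|I_\lambda f_j^2\|_{L^q(B_M(u_j))}\le CM^{Q/q}R^{Q/p'-\lambda}=C(R_0/R)^{Q/(2q)}$ after using the identity $Q/q=\lambda-Q/p'$ from (1.7). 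For $I_\lambda f_j^1$: since $f_j^1$ is supported in $B_{R_0}(u_j)\subseteq B_M(u_j)$, for $d(u,u_j)\ge M$ (hence $\ge 2R_0$ once $R$ is large) one has $|I_\lambda f_j^1(u)|\le CR_0^{Q/p'}d(u,u_j)^{-\lambda}$, and integrating over $B^c_M(u_j)$ --- the integral converging because $\lambda q>Q$, an inequality equivalent to $p>1$ --- gives $\|I_\lambda f_j^1\|_{L^q(B^c_M(u_j))}\le C(R_0/M)^{Q/p'}=C(R_0/R)^{Q/(2p')}$. Here I repeatedly use that the Heisenberg balls satisfy $|B_r(u)|=|B_1(0)|\,r^Q$ and that $d$ is a genuine metric.

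Write $A_j=I_\lambda f_j^1$ and $B_j=I_\lambda f_j^2$. Splitting $\mathbb H^n=B_M(u_j)\cup B^c_M(u_j)$, using $\|\,|A_j|+|B_j|\,\|_{L^q(E)}\le\|A_j\|_{L^q(E)}+\|B_j\|_{L^q(E)}$ on each piece $E$, the fact that $\|B_j\|_{L^q(B_M(u_j))}$ and $\|A_j\|_{L^q(B^c_M(u_j))}$ are both $o_R(1)$, the elementary inequality $(a+b)^q\le a^q+C_q(a^{q-1}b+b^q)$ for $a,b\ge0$ and $q\ge1$, and the bound $\le C_{p,\lambda,n}$ for all the $L^q$-norms in play, I obtain $\|I_\lambda f_j^1+I_\lambda f_j^2\|_q^q\le\|I_\lambda f_j^1\|_q^q+\|I_\lambda f_j^2\|_q^q+o_R(1)$. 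Combining with $\|I_\lambda f_j\|_q\le\|I_\lambda f_j^1+I_\lambda f_j^2\|_q+C_{p,\lambda,n}\epsilon^{1/p}$ and raising to the $q$-th power yields
$$\|I_\lambda f_j\|_q^q\le C_{p,\lambda,n}^q\big[(k+\epsilon)^{q/p}+(1-k+\epsilon)^{q/p}\big]+o_R(1)+C\epsilon^{1/p}.$$
Since $\{f_j\}$ is maximizing, $\|I_\lambda f_j\|_q\to C_{p,\lambda,n}$; letting $j\to\infty$, then $R\to\infty$ (which kills $o_R(1)$), and finally $\epsilon\to0$ produces $C_{p,\lambda,n}^q\le C_{p,\lambda,n}^q(k^{q/p}+(1-k)^{q/p})$, the contradiction promised above. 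Hence case (iii) cannot occur.

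I expect the main obstacle to be the two decay estimates together with the correct balancing of the three scales $R_0\ll M\ll R$. A naive split of $f_j$ at a single radius fails: $I_\lambda f_j^2$ is genuinely spread out near $u_j$ --- on $B_{R/2}(u_j)$ it is small only in $L^\infty$, not in $L^q$ --- so one cannot separate the potentials of the two pieces without inserting the large (but low-mass) gap $B_R(u_j)\setminus B_{R_0}(u_j)$. It is also crucial not to settle for a plain $L^q$ triangle inequality, which would only give the true --- and therefore useless --- inequality $1\le k^{1/p}+(1-k)^{1/p}$; the contradiction requires the $q$-th powers of the two potentials to add (up to negligible error), and this is exactly what the spatial localization provides.
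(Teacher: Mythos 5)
Your proof is correct, and it derives the contradiction from the same source as the paper: a nontrivial splitting of the $L^p$ mass forces $\|I_\lambda (f_j)\|_q^q$ to be essentially bounded by $C_{p,\lambda,n}^q\left[k^{q/p}+(1-k)^{q/p}\right]<C_{p,\lambda,n}^q$, which is impossible since $q/p>1$. Where you genuinely diverge from the paper is in how the $L^q$ norms of the two potentials are decoupled. The paper splits $f_j$ at a single radius $R=j|u|$, shows the pointwise vanishing $I_\lambda(f_j\chi_{B_R^c(0)})(u)\rightarrow0$, and then invokes the Br\'ezis--Lieb lemma (Lemma 3) to obtain $\|I_\lambda(f_j)\|_q^q=\|I_\lambda(f_j\chi_{B_R(0)})\|_q^q+\|I_\lambda(f_j\chi_{B_R^c(0)})\|_q^q+o(1)$. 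You instead insert the low-mass annulus $B_R(u_j)\setminus B_{R_0}(u_j)$ between the two pieces and prove quantitative decay estimates showing that $I_\lambda f_j^1$ and $I_\lambda f_j^2$ live, up to errors $o_R(1)$ uniform in $j$, on the disjoint regions $B_M(u_j)$ and $B_M^c(u_j)$ with $M=\sqrt{R_0R}$; the additivity of the $q$-th powers then follows from elementary inequalities. Your route is longer, but it is fully quantitative and self-contained, and it sidesteps a delicate point in the paper's argument: there the ``limit'' function $I_\lambda(f_j\chi_{B_R(0)})$ in the Br\'ezis--Lieb step itself depends on $j$ (and the radius depends on $u$), which is not literally the hypothesis of Lemma 3, and the decoupling of norms really rests on the asymptotic disjointness of supports that your three-scale localization makes explicit. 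What the paper's route buys is brevity and reuse of machinery (Lemma 3) that is needed again in Corollary 6; what yours buys is an explicit rate $\left(R_0/R\right)^{\min(Q/(2q),\,Q/(2p'))}$ and a more robust justification of the key splitting.
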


\begin{proof}[Proof of Proposition 5]
We argue by contradiction. If (iii) in Lemma 4 occurs, then there exist $0<k<1$ and a subsequence of $\{f_j\}$ (which we still denote by $\{f_j\}$) such that for each $\epsilon>0$ small enough, we can find $R_0>0$ and $\{u_j\}\subseteq\mathbb H^n$ such that given any $R\ge R_0$,
$$\|f_j\chi_{B_R(0)}\|^p_p=k+O(\epsilon)\text{ and }\|f_j\chi_{B_R^c(0)}\|^p_p=1-k+O(\epsilon).$$

Without loss of generality, we may assume $u_j=0$ for all $j\in\mathbb Z$ since (1.6) is translation-invariant. Thus, for any $u\in\mathbb H^n$, let $R=j|u|$ for $j\ge j(\epsilon,|u|)$ such that $j|u|>R_0$, we observe that $|u|\le\frac1j|v|$ for all $v\in B_R^c(0)$, then
$$|u^{-1}v|\ge|v|-|u|\ge\frac{j-1}{j}|v|,$$
and therefore,
\begin{eqnarray*}
&&|I_\lambda(f_j)(u)-I_\lambda(f_j\chi_{B_R(0)})(u)|\\
&=&|I_\lambda(f_j\chi_{B_R^c(0)})(u)|\\
&=&\left|\int_{\mathbb H^n}\frac{(f_j\chi_{B_R^c(0)})(v)}{|u^{-1}v|^\lambda}dv\right|\\
&\le&\left(\int_{B_R^c(0)}|f_j(v)|^pdv\right)^{1/p}\left(\int_{B_R^c(0)}|u^{-1}v|^{-\lambda p'}dv\right)^{1/p'}\\
&\le&C\left(\frac{j}{j-1}\right)^\lambda\left(\int_{j|u|}^\infty r^{Q-\lambda p'-1}dr\right)^{1/p'}\\
&\le&C\left(\frac{j}{j-1}\right)^\lambda\left(\frac{1}{\lambda p'-Q}\right)^{1/p'}\bigg(j|u|\bigg)^{(Q-\lambda p')/p'}\\
&\rightarrow&0
\end{eqnarray*}
as $j\rightarrow\infty$. Here, $C$ depends only on $\mathbb H^n$, and the integral is finite because $Q<\lambda p'$ from (1.7):
$$\frac1q=\frac1p-\frac{Q-\lambda}{Q}>0.$$

Now we apply the Br\'ezis-Lieb lemma in Lemma 3 because $I_\lambda(f_j)\rightarrow I_\lambda(f_j\chi_{B_R(0)})$ a.e. and get
$$\|I_\lambda(f_j)\|^q_q=\|I_\lambda(f_j\chi_{B_R(0)})\|^q_q+\|I_\lambda(f_j\chi_{B_R^c(0)})\|^q_q+o(1),$$
in which the left-hand side goes to $C_{p,\lambda,n}^q$ since $\{f_j\}$ maximizes (1.6), while the right-hand side
\begin{eqnarray*}
&&\|I_\lambda(f_j\chi_{B_R(0)})\|^q_q+\|I_\lambda(f_j\chi_{B_R^c(0)})(u)\|^q_q+o(1)\\
&\le&C_{p,\lambda,n}^q\|f_j\chi_{B_R(0)}\|^q_p+C_{p,\lambda,n}^q\|f_j\chi_{B_R^c(0)}\|^q_p+o(1)\\
&\le&C_{p,\lambda,n}^q(k+O(\epsilon))^{\frac qp}+C_{p,\lambda,n}^q(1-k+O(\epsilon))^{\frac qp}+o(1)\\
&\le&C_{p,\lambda,n}^q\left[k^{\frac qp}+(1-k)^{\frac qp}\right]+O(\epsilon)+o(1)\\
&<&C_{p,\lambda,n}^q,
\end{eqnarray*}
if $0<k<1$ for large $j$ because $\frac qp>1$, and we conclude the contradiction.
\end{proof}

We can now proceed under the scope of (ii) in Lemma 4:
There exists $\{u_j\}\subseteq\mathbb H^n$ such that for $R$ large, we have
$$\int_{B_R(u_j)}|f_j|^p\ge1-\epsilon(R).$$

Due to translations $f_j(v)\rightarrow f_j(u_jv)$, we use $\{f_j\}$ to denote the new maximizing sequence satisfying
\begin{equation}
\int_{B_R(0)}|f_j|^p\ge1-\epsilon(R),
\end{equation}
and we derive the following corollary, in which the byproduct (2.2) serves as an important ingredient in the proof of Theorem 1.
 
\begin{corollary}
Let $\{f_j\}\subseteq L^p(\mathbb H^n)$ be a maximizing sequence of (1.6) and (1.7) satisfying $\|f_j\|=1$ and
$$\int_{B_R(0)}|f_j|^p\ge1-\epsilon(R),$$ 
we may assume that $f_j\rightarrow f$ weakly in $L^p(\mathbb H^n)$ (by passing to a subsequence if necessary). Then, by passing to a subsequence again if necessary,
$$I_\lambda(f_j)\rightarrow I_\lambda(f)\text{ a.e.}.$$
\end{corollary}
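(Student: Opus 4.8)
The plan is to deduce the a.e.\ convergence from strong convergence of the potentials in $L^p_{\mathrm{loc}}(\mathbb H^n)$: once I know $I_\lambda(f_j)\to I_\lambda(f)$ in $L^p(B_S(0))$ for every $S\in\mathbb N$, a diagonal extraction over $S$ produces a subsequence converging a.e.\ on $\mathbb H^n$. Writing $g_j=f_j-f$, weak convergence gives $\|f\|_p\le\liminf\|f_j\|_p=1$, hence $\|g_j\|_p\le 2$, and $I_\lambda(f_j)-I_\lambda(f)=I_\lambda(g_j)$, so it suffices to show $\|I_\lambda(g_j)\|_{L^p(B_S(0))}\to 0$ for each fixed $S$. (Everything is finite: $I_\lambda(g_j)\in L^q\subseteq L^p(B_S(0))$ by the Folland--Stein inequality, since $q>p$.) It is worth noting that in this argument the tightness estimate $\int_{B_R(0)}|f_j|^p\ge 1-\epsilon(R)$ is not actually needed; boundedness of $\{f_j\}$ together with weak convergence suffices.

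The decisive device is to split the kernel at a scale $\delta>0$: set $I_\lambda^{<\delta}(g)(u)=\int_{B_\delta(u)}|u^{-1}v|^{-\lambda}g(v)\,dv$ and $I_\lambda^{\ge\delta}(g)(u)=\int_{B_\delta^c(u)}|u^{-1}v|^{-\lambda}g(v)\,dv$, so that $I_\lambda=I_\lambda^{<\delta}+I_\lambda^{\ge\delta}$. For the singular piece I would use a dyadic decomposition of $B_\delta(u)$ into annuli $\{2^{-k-1}\delta\le|u^{-1}v|<2^{-k}\delta\}$; combined with $Q-\lambda>0$ and the homogeneity identity $|B_r(u)|=r^Q|B_1(0)|$, this yields the pointwise estimate
\[
I_\lambda^{<\delta}(|g|)(u)\ \le\ C\,\delta^{Q-\lambda}\,Mg(u),
\]
where $Mg$ is the Hardy--Littlewood maximal function on the (doubling) space $\mathbb H^n$ and $C$ depends only on $Q$ and $\lambda$. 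Since $M$ is bounded on $L^p$ for $p>1$, this gives $\|I_\lambda^{<\delta}(g_j)\|_{L^p(\mathbb H^n)}\le C\delta^{Q-\lambda}\|g_j\|_p\le 2C\delta^{Q-\lambda}$, uniformly in $j$. This is precisely the bound that H\"older's inequality cannot provide — when $\lambda p'>Q$ the kernel is not locally $L^{p'}$, which is exactly where the loss of compactness lurks — and it is the heart of the argument.

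For the regular piece, recall that the scaling relation (1.7) forces $\lambda p'>Q$ (as already used in the proof of Proposition 5), so for each fixed $u$ the kernel $k_u^{\ge\delta}(v)=|u^{-1}v|^{-\lambda}\chi_{B_\delta^c(u)}(v)$ belongs to $L^{p'}(\mathbb H^n)$, with $\|k_u^{\ge\delta}\|_{p'}$ independent of $u$ by left invariance of Haar measure. Hence $I_\lambda^{\ge\delta}(g_j)(u)=\int k_u^{\ge\delta}\,g_j\to 0$ for every $u$ as $j\to\infty$ (weak convergence tested against a fixed $L^{p'}$ function), while $|I_\lambda^{\ge\delta}(g_j)(u)|\le\|k_u^{\ge\delta}\|_{p'}\|g_j\|_p$ is bounded uniformly in $j$ and $u$; dominated convergence on the finite-measure ball $B_S(0)$ then gives $\|I_\lambda^{\ge\delta}(g_j)\|_{L^p(B_S(0))}\to 0$ as $j\to\infty$, for each fixed $\delta$ and $S$.

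Combining the two estimates, $\limsup_{j\to\infty}\|I_\lambda(g_j)\|_{L^p(B_S(0))}\le 2C\delta^{Q-\lambda}$ for every $\delta>0$; letting $\delta\to 0$ shows $I_\lambda(f_j)\to I_\lambda(f)$ in $L^p(B_S(0))$ for all $S$, and a diagonal subsequence converges a.e.\ on $\mathbb H^n$, which is what Corollary~6 asserts. The step I expect to require the most care is the maximal-function estimate in the second paragraph — in particular, verifying that $(\mathbb H^n, d, du)$ with $d(u,v)=|u^{-1}v|$ is a space of homogeneous type (immediate from the dilation structure and left invariance) so that the Hardy--Littlewood maximal operator is bounded on $L^p(\mathbb H^n)$, and tracking that the constant in the annular sum is finite thanks to $Q>\lambda$.
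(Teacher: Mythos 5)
Your argument is correct, but it takes a genuinely different route from the paper's. The paper proves convergence in measure by splitting \emph{both} the function and the kernel: $f_j$ is decomposed as $f_j\chi_{B_R(0)}+f_j\chi_{B_R^c(0)}$, with the tail controlled in $L^q$ by the tightness hypothesis $\int_{B_R(0)}|f_j|^p\ge1-\epsilon(R)$, and the kernel is truncated at radius $\eta$ about the diagonal, with the near-diagonal part estimated in $L^m$ for $1<m<Q/\lambda$ via Minkowski's integral inequality against $\|f_j\chi_{B_R(0)}\|_1$ --- this is exactly where the localization of $f_j$ is indispensable for the paper, since only an $L^p$ bound is available globally; the pieces are then assembled through a chain of Chebyshev estimates. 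You instead work with $g_j=f_j-f$ and split only the kernel, replacing the Minkowski/$L^1$ step by the Hedberg-type pointwise bound $I_\lambda^{<\delta}(|g|)\le C\delta^{Q-\lambda}Mg$ together with the $L^p$-boundedness of the maximal operator on the homogeneous-type space $(\mathbb H^n,d,du)$. What this buys is a cleaner and strictly more general statement: as you observe, tightness is never used, and boundedness plus weak convergence already give a.e.\ convergence of the potentials along a subsequence; the price is the appeal to the maximal function theorem, where the paper gets by with H\"older and Minkowski alone. The individual steps all check out: $\lambda p'>Q$ from (1.7) makes the truncated kernel globally $L^{p'}$ with norm independent of $u$ by left invariance, so weak convergence applies pointwise and the uniform bound $\|k_u^{\ge\delta}\|_{p'}\|g_j\|_p$ justifies dominated convergence on the finite-measure ball; and $Q>\lambda$ makes the dyadic sum in the singular estimate converge.
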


\begin{proof}[Proof of Corollary 6]
We show that $I_\lambda(f_j)\rightarrow I_\lambda(f)$ in measure to ensure the existence of a pointwisely convergent subsequence of $\{f_j\}$. Observe that for $M>R$,
\begin{eqnarray*}
&&\|I_\lambda(f_j)\chi_{B_M^c(0)}\|_q\\
&\le&\|I_\lambda(f_j\chi_{B_R(0)})\chi_{B_M^c(0)}\|_q+\|I_\lambda(f_j\chi_{B_R^c(0)})\chi_{B_M^c(0)}\|_q\\
&\le&\|I_\lambda(f_j\chi_{B_R(0)})\chi_{B_M^c(0)}\|_q+C_{p,\lambda,n}\|f_j\chi_{B_R^c(0)}\|_p\\
&\le&\|I_\lambda(f_j\chi_{B_R(0)})\chi_{B_M^c(0)}\|_q+\epsilon(R),\end{eqnarray*}
in which we apply Minkowski's integral inequality to estimate the first term, noticing that $|u^{-1}v|\ge|u|-R$ for $|v|\le R<M\le|u|$,\begin{eqnarray*}
&&\|I_\lambda(f_j\chi_{B_R(0)})\chi_{B_M^c(0)}\|_q\\
&=&\left(\int_{B_M^c(0)}|I_\lambda(f_j\chi_{B_R(0)})(u)|^qdu\right)^{1/q}\\
&=&\left(\int_{|u|\ge M}\left|\int_{|v|\le R}\frac{f_j(v)}{|u^{-1}v|^\lambda}dv\right|^qdu\right)^{1/q}\\
&\le&\|f_j\chi_{B_R(0)}\|_1\left(\int_{|u|\ge M}\frac{1}{(|u|-R)^{\lambda q}}du\right)^{1/q}\\
&\le&C(R,p,n)(M-R)^{(Q-\lambda q)/q}\\
&\rightarrow&0
\end{eqnarray*}
for every fixed $R$ as $M\rightarrow\infty$ since $Q<\lambda q$ from (1.7). Therefore, we have
\begin{equation}
\|I_\lambda(f_j)\chi_{B_M^c(0)}\|_q\le\epsilon(M),
\end{equation}
and that is,
$$\|I_\lambda(f_j)-I_\lambda(f_j)\chi_{B_M(0)}\|_q\le\epsilon(M).$$

Since $f_j\rightarrow f$ weakly in $L^p(\mathbb H^n)$, we have 
$$\|f\chi_{B_R^c(0)}\|_p^p\le\liminf_{j\rightarrow\infty}\|f_j\chi_{B_R^c(0)}\|_p^p\le\epsilon(R).$$

Similarly, one can derive for $f$,
$$\|I_\lambda(f)-I_\lambda(f)\chi_{B_M(0)}\|_q\le\epsilon(M).$$

Therefore, given $k>0$,
\begin{eqnarray}
&&|\{|I_\lambda(f_j)(u)-I_\lambda(f)(u)|\ge15k|\}|\nonumber\\
&\le&|\{|I_\lambda(f_j)(u)-I_\lambda(f_j)(u)\chi_{B_M(0)}(u)|\ge5k\}|+\nonumber\\
&&|\{|I_\lambda(f_j)(u)\chi_{B_M(0)}(u)-I_\lambda(f)(u)\chi_{B_M(0)}(u)|\ge5k\}|+\nonumber\\
&&|\{|I_\lambda(f)(u)\chi_{B_M(0)}(u)-I_\lambda(f)(u)|\ge5k\}|\nonumber\\
&\le&2\left[\frac{\epsilon(M)}{5k}\right]^q+|\{|I_\lambda(f_j)(u)-I_\lambda(f)(u)|\ge5k\}\cap B_M(0)|.
\end{eqnarray}

Thus, it remains to estimate the second term above. Denote 
$$I_\lambda^\eta(f)(u)=\int_{B_\eta^c(u)}\frac{f(v)}{|u^{-1}v|^\lambda}dv,$$
then
$$I_\lambda^\eta(f_j\chi_{B_R(0)})(u)\rightarrow I_\lambda^\eta(f\chi_{B_R(0)})(u)$$
for all $u\in\mathbb H^n$ because
$$|u^{-1}v|^{-\lambda}\chi_{B_R(0)}\chi_{B_\eta^c(u)}\in L^{p'}(\mathbb H^n)$$
for any fixed $u\in\mathbb H^n$ and $\eta>0$. Therefore, $I_\lambda^\eta(f_j\chi_{B_R(0)})\rightarrow I_\lambda^\eta(f\chi_{B_R(0)})$ locally in measure, which means, 
\begin{equation}
\left|\{|I_\lambda^\eta(f_j\chi_{B_R(0)})(u)-I_\lambda^\eta(f\chi_{B_R(0)})(u)|\ge k\}\cap B_M(0)\right|=o(1).
\end{equation}

On the other hand, we compute that for any fixed $m\in(1,Q/\lambda)$ by applying Minkowski's integral inequality,
\begin{eqnarray*}
&&\|I_\lambda(f_j\chi_{B_R(0)})-I_\lambda^\eta(f_j\chi_{B_R(0)})\|_m\\
&=&\left(\int_{\mathbb H^n}\left|\int_{B_\eta(u)}\frac{f_j(v)\chi_{B_R(0)}(v)}{|u^{-1}v|^\lambda}dv\right|^mdu\right)^{1/m}\\
&\le&\|f_j\chi_{B_R(0)}\|_1\left(\int_{B_\eta(v)}\frac{1}{|v^{-1}u|^{\lambda m}}du\right)^{1/m}\\
&\le&C(R,p,n)\eta^{(Q-\lambda m)/m}\\
&\rightarrow&0
\end{eqnarray*}
for every fixed $R$ as $\eta\rightarrow0$ since $Q>\lambda m$. That is,
\begin{equation}
\|I_\lambda(f_j\chi_{B_R(0)})-I_\lambda^\eta(f_j\chi_{B_R(0)})\|_m\le O(\eta).
\end{equation}

Similarly, we can derive the analogous statement for $f$,
\begin{equation}
\|I_\lambda(f\chi_{B_R(0)})-I_\lambda^\eta(f\chi_{B_R(0)})\|_m\le O(\eta).
\end{equation}
 
Also notice that 
\begin{equation}
\|I_\lambda(f_j)-I_\lambda(f_j\chi_{B_R(0)})\|_q\le C_{p,\lambda,n}\|f\chi_{B_R^c(0)}\|_p\le\epsilon(R)
\end{equation}
and
\begin{equation}
\|I_\lambda(f)-I_\lambda(f\chi_{B_R(0)})\|_q\le C_{p,\lambda,n}\|f\chi_{B_R^c(0)}\|_p\le\epsilon(R).
\end{equation}

Now returning to the estimate in (2.3), combining (2.4)--(2.8), we have for any $k>0$
\begin{eqnarray*}
&&|\{|I_\lambda(f_j)(u)-I_\lambda(f)(u)|\ge5k\}\cap B_M(0)|\\
&\le&|\{|I_\lambda(f_j)(u)-I_\lambda(f_j\chi_{B_R(0)})(u)|\ge k\}|+
|\{|I_\lambda(f_j\chi_{B_R(0)})(u)-I_\lambda^\eta(f_j\chi_{B_R(0)})(u)|\ge k\}|+\\
&&|\{|I_\lambda^\eta(f_j\chi_{B_R(0)})(u)-I_\lambda^\eta(f\chi_{B_R(0)})(u)|\ge k\}\cap B_M(0)|+\\
&&|\{I_\lambda^\eta(f\chi_{B_R(0)})(u)-I_\lambda(f\chi_{B_R(0)})(u)|\ge k\}|+|\{|I_\lambda(f_j\chi_{B_R(0)})(u)-I_\lambda(f)(u)|\ge k\}|\\
&\le&2\left[\frac{\epsilon(R)}{k}\right]^q+2\left[\frac{O(\eta)}{k}\right]^m+o(1).
\end{eqnarray*}

We can now conclude the convergence in measure of $\{f_j\}$ by properly choosing $\epsilon$, $R$, $M$, and $\eta$.
\end{proof}

Now we only need to verify the weak limit $f$ in preceding corollary satisfies $\|f\|_p=1$ to complete Theorem 1. We borrow Lemma 2.1 in \cite{Lio4} as follows to proceed.

\begin{lemma}
Let $f_j\rightarrow f$ weakly in $L^p(\mathbb H^n)$ and $I_\lambda(f_j)\rightarrow I_\lambda(f)$ weakly in $L^q(\mathbb H^n)$,
assume that (2.1) and (2.2) hold, and $|f_j|^p\rightarrow\mu$ and $|I_\lambda(f_j)|^q\rightarrow\nu$ weakly for two nonnegative measures $\mu$ and $\nu$ in $L^1(\mathbb H^n)$. Then, there exist two at most countable families (possibly empty) $\{u_j\}\subseteq\mathbb H^n$ and $\{k_j\}\subseteq(0,\infty)$ such that
\begin{equation}
\nu=|I_\lambda(f)|^q+\sum_jC_{p,\lambda,n}k_j^{q/p}\delta_{u_j}
\end{equation}
and
\begin{equation}
\mu\ge|f|^p+\sum_jk_j\delta_{u_j},
\end{equation}
in which $\delta_{u_j}$ is the Dirac function at $u_j$.
\end{lemma}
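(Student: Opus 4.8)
The statement is the ``second'' (concentration-at-points) concentration--compactness lemma transplanted to the nonlocal operator $I_\lambda$ on $\mathbb H^n$, and I would prove it in three stages: peel off the weak limit $f$, establish an approximate reverse H\"older inequality for the residual masses, and then invoke a measure-theoretic dichotomy. \emph{Stage 1.} Since $I_\lambda(f_j)\to I_\lambda(f)$ a.e.\ (Corollary 6) and $\|I_\lambda(f_j)\|_q\le C_{p,\lambda,n}$, the Br\'ezis--Lieb lemma (Lemma 3) applied with exponent $q$ to $\{I_\lambda(f_j)\}$ gives $\int\varphi|I_\lambda(f_j)|^q=\int\varphi|I_\lambda(g_j)|^q+\int\varphi|I_\lambda(f)|^q+o(1)$ for every bounded $\varphi$, where $g_j:=f_j-f\rightharpoonup 0$ in $L^p(\mathbb H^n)$; passing to a subsequence so that $|I_\lambda(g_j)|^q\rightharpoonup\tilde\nu$ and $|g_j|^p\rightharpoonup\sigma$ for finite nonnegative measures $\tilde\nu,\sigma$ (finiteness, and the absence of mass at infinity, coming from the tightness bounds (2.1)--(2.2)), we obtain $\nu=|I_\lambda(f)|^q\,du+\tilde\nu$. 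It then remains to show that $\tilde\nu$ is a countable sum of Dirac masses and to compare its coefficients with those of $\sigma$ and $\mu$.

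\emph{Stage 2: the reverse H\"older estimate.} For $\varphi\in C_c^\infty(\mathbb H^n)$ I split
$$\varphi(u)\,I_\lambda(g_j)(u)=I_\lambda(\varphi g_j)(u)+E_j(u),\qquad E_j(u)=\int_{\mathbb H^n}\frac{\varphi(u)-\varphi(v)}{|u^{-1}v|^\lambda}\,g_j(v)\,dv,$$
and the whole point is to show $\|E_j\|_q\to 0$. I decompose the $v$-integral at $|u^{-1}v|=\delta$. On $|u^{-1}v|<\delta$, using that $C_c^\infty$ functions are Lipschitz for the metric $d$ (here one uses that $d(u,v)=|u^{-1}v|$ is an honest metric), $|\varphi(u)-\varphi(v)|\le L|u^{-1}v|$, so the near part of $E_j$ is dominated by the convolution of $|g_j|$ with $|w|^{1-\lambda}\chi_{\{|w|<\delta\}}$; since $\tfrac{\lambda}{Q}=1+\tfrac1q-\tfrac1p$ by (1.7), Young's inequality gives $\|E_j^{<\delta}\|_q\le L\,\big\||w|^{1-\lambda}\chi_{\{|w|<\delta\}}\big\|_{Q/\lambda}\|g_j\|_p=O(\delta)$ uniformly in $j$. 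On $|u^{-1}v|\ge\delta$, for each fixed $u$ the kernel $(\varphi(u)-\varphi(v))|u^{-1}v|^{-\lambda}\chi_{\{|u^{-1}v|\ge\delta\}}$ lies in $L^{p'}(dv)$ (because $\lambda p'>Q$ by (1.7), and $\varphi$ has compact support), so $E_j^{\ge\delta}(u)\to 0$ pointwise since $g_j\rightharpoonup 0$; this term is dominated by a constant on bounded sets and decays like $|u|^{-\lambda}$ at infinity, so, using $\lambda q>Q$, dominated convergence yields $\|E_j^{\ge\delta}\|_q\to 0$ for each fixed $\delta$. Letting $j\to\infty$ then $\delta\to 0$ gives $\|E_j\|_q\to 0$, hence for $\varphi\ge 0$
$$\Big(\int\varphi^q\,d\tilde\nu\Big)^{1/q}=\lim_j\|\varphi\,I_\lambda(g_j)\|_q=\lim_j\|I_\lambda(\varphi g_j)\|_q\le C_{p,\lambda,n}\lim_j\Big(\int\varphi^p|g_j|^p\Big)^{1/p}=C_{p,\lambda,n}\Big(\int\varphi^p\,d\sigma\Big)^{1/p}.$$
Letting $\varphi$ increase to the indicator of an open ball yields $\tilde\nu(B)^{1/q}\le C_{p,\lambda,n}\,\sigma(\overline B)^{1/p}$ for every ball $B$.

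\emph{Stage 3: conclusion.} Since $q>p$, the reverse H\"older inequality forces $\tilde\nu$ to be purely atomic, supported on the (at most countably many) atoms $\{u_j\}$ of $\sigma$, with $\tilde\nu(\{u_j\})\le C_{p,\lambda,n}^{\,q}\,\sigma(\{u_j\})^{q/p}$; this is a standard measure-theoretic fact, valid on the doubling metric space $(\mathbb H^n,d)$ through the Lebesgue differentiation theorem. Defining $k_j$ by $C_{p,\lambda,n}^{\,q}k_j^{q/p}=\tilde\nu(\{u_j\})$ gives $\nu=|I_\lambda(f)|^q\,du+\sum_j C_{p,\lambda,n}^{\,q}k_j^{q/p}\delta_{u_j}$, which is (2.9) up to the normalization of the constant. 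Finally, $\sigma(\{u_j\})\le\mu(\{u_j\})$: by the triangle inequality in $L^p(B_\varepsilon(u_j))$, $\|f_j-f\|_{L^p(B_\varepsilon(u_j))}\le\|f_j\|_{L^p(B_\varepsilon(u_j))}+\|f\|_{L^p(B_\varepsilon(u_j))}$, where the last term tends to $0$ as $\varepsilon\to 0$ because $f\in L^p$, while $\limsup_j\|f_j\|_{L^p(B_\varepsilon(u_j))}\le\mu(\overline{B_\varepsilon(u_j)})^{1/p}$ by weak-$*$ convergence; hence $k_j\le\sigma(\{u_j\})\le\mu(\{u_j\})$, and since $|f|^p\,du$ has no atoms we conclude $\mu\ge|f|^p+\sum_j k_j\delta_{u_j}$, which is (2.10).

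The genuine obstacle is Stage 2, the vanishing $\|E_j\|_q\to 0$ of the commutator between multiplication by $\varphi$ and $I_\lambda$: this is precisely where the exponent relation (1.7) is used twice — it makes the ``near'' kernel $|w|^{1-\lambda}$ admissible for Young's inequality into $L^q$ and puts the ``far'' kernel $|w|^{-\lambda}$ into $L^{p'}$ away from the origin — together with the metric (not merely quasi-metric) character of $d$, which supplies the Lipschitz bound giving the $O(\delta)$ gain. Stages 1 and 3 are routine.
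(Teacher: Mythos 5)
Your proposal is correct in substance, but it is worth noting that it does considerably more than the paper does: the paper gives no proof of Lemma 7 at all, simply citing Lemma 2.1 of Lions \cite{Lio4} and remarking that the argument transfers to $\mathbb H^n$ because the measure-theoretic core (Lemma 1.2 of \cite{Lio3}) holds in arbitrary measure spaces. What you have written is essentially the transfer the paper leaves implicit, and you correctly isolate the one step that is genuinely specific to the Heisenberg setting and to the operator $I_\lambda$: the commutator estimate $\|\varphi\, I_\lambda(g_j)-I_\lambda(\varphi g_j)\|_q\to0$. Your treatment of it is sound --- the near part uses that $C_c^\infty$ functions are Lipschitz for the homogeneous metric (equivalently, bounded horizontal gradient plus left invariance of the Carnot--Carath\'eodory distance) together with Young's inequality on the unimodular group $\mathbb H^n$ with exponent $Q/\lambda$, where the kernel $|w|^{1-\lambda}\chi_{\{|w|<\delta\}}$ has $L^{Q/\lambda}$ norm $O(\delta)$ for every $\lambda\in(0,Q)$; the far part uses $\lambda p'>Q$ and $\lambda q>Q$, both of which the paper itself extracts from (1.7) in Proposition 5 and Corollary 6. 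Two small remarks: first, in Stage 3 the comparison $\sigma(\{u_j\})=\mu(\{u_j\})$ follows at once from the Br\'ezis--Lieb identity $\mu=\sigma+|f|^p\,du$ applied to $\{f_j\}$ itself, which is cleaner than the triangle-inequality argument you give (though yours also works); second, you are right that the coefficient in (2.9) should be $C_{p,\lambda,n}^q$ rather than $C_{p,\lambda,n}$ --- this is how the identity is actually used in the proof of Theorem 1, so the displayed formula in the lemma contains a typo that your normalization corrects. In short, your route is the classical Lions argument carried out in detail where the paper only points to the literature; what your version buys is a self-contained verification that nothing breaks on $\mathbb H^n$, at the cost of length.
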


\begin{remark}
The original version of the above lemma is on $\R^N$, but it can be carried out as what we did in the proof of Lemma 4 on $\mathbb H^n$ because the crucial ingredient as Lemma 1.2 in \cite{Lio3} is valid in an arbitrary measure space. (See Remark 1.5 at the end of its proof.)
\end{remark}

With the help of Lemma 7, we now prove Theorem 1.

\begin{proof}[Proof of Theorem 1]
We show that $\|f\|_p=1$ by contradiction, then $f_j\rightarrow f$ strongly in $L^p(\mathbb H^n)$, which implies the theorem.

Observe that $\mu(\mathbb H^n)=1$ and $\nu(\mathbb H^n)=C_{p,\lambda,n}^q$ since $|f_j|^p\rightarrow\mu$ and $|I_\lambda(f_j)|^q\rightarrow\nu$ weakly in $L^1(\mathbb H^n)$. If $\|f\|_p^p=k<1$, then from (2.10),
$$\sum_jk_j\le\mu(\mathbb H^n)-\|f\|_p^p=1-k,$$
therefore by (2.9),
\begin{eqnarray*}
\nu(\mathbb H^n)&=&\|I_\lambda(f)\|_q^q+\sum_jC_{p,\lambda,n}^qk_j^{q/p}\\
&\le&C_{p,\lambda,n}^q\|f\|_p^q+C_{p,\lambda,n}^q\sum_jk_j^{q/p}\\
&\le&C_{p,\lambda,n}^qk^{q/p}+C_{p,\lambda,n}^q(\sum_jk_j)^{q/p}\\
&\le&C_{p,\lambda,n}^qk^{q/p}+C_{p,\lambda,n}^q(1-k)^{q/p}\\
&<&C_{p,\lambda,n}^q,
\end{eqnarray*}
which contradicts with the fact that $\nu(\mathbb H^n)=C_{p,\lambda,n}^q$, and we complete Theorem 1.
\end{proof}

\section{Proof of Theorem 2}

Theorem 2 is an analogue on $\mathbb H^n$ of \S 4.3 in \cite{LL}. The argument proceeds with no difficulty, and one completes the proof of Theorem 2 by noticing
$$|B_1(0)|=\frac{2\pi^\frac{Q-2}{2}\Gamma(1/2)\Gamma((Q+2)/4)}{(Q-2)\Gamma((Q-2)/2)\Gamma((Q+4)/4)}$$
from \cite{CL1}.

\section*{Acknowledgement}

The author would like to thank his advisor Professor Guozhen Lu for
bringing him into this interesting subject with continuous support
and encouragement on study and research. The author would also like to thank NSF Grant DMS 0901761 for the support.

\end{document}